\newtheorem{prop}{\bfseries Proposition}
\newtheorem{lem}{\bfseries Lemma}
\newtheorem{thm}{\bfseries Theorem}
\newtheorem{defn}{\bfseries Definition}
\renewcommand{\=}[1]{\stackrel{(\ref{#1})}{=}}
\begin{document}

\begin{center}
{\bf\large\sc 
Totally positive matrices and dilogarithm identities 
} 

\vspace*{6mm}
{  Andrei Bytsko \ and \ Alexander Volkov }

\end{center}
\vspace*{1mm}

\begin{abstract}
\noindent

We show that two involutions on the variety $N_n^+$ of upper triangular totally 
positive matrices are related, on the one hand,
to the tetrahedron equation and, on the other hand, to the action 
of the symmetric group $S_3$ on some subvariety of $N_n^+$
and on the set of certain functions on $N_n^+$.
Using these involutions, we obtain a family of dilogarithm identities 
involving minors of totally positive matrices.
These identities admit a form manifestly invariant under the action
of the symmetric group $S_3$.
 
\end{abstract}

{\small
\noindent
Keywords: totally positive matrices; dilogarithm identities; symmetric group \\
MSC: 15B48, 11C20, 33B30, 20B30
}

\section{Introduction}

The quantum dilogarithm identities related to the quantum 
tetrahedron equation constructed in \cite{BV1} 
  possess an $S_3$ symmetry
in the sense that they are invariant under a certain automorphism of 
order three and three involutive antiautomorphisms of the related 
quantum torus algebra.  According to
\cite{FK1}, the quasi-classical limit of a quantum dilogarithm identity
yields an identity for the Rogers dilogarithm function. 
In the present article, we will construct a family of
identities for the Rogers dilogarithm that are closely related to
the quantum dilogarithm identities mentioned above. In particular,
they will be shown to possess an $S_3$ symmetry in a similar sense.
We will not derive the Rogers dilogarithm identities from their 
quantum counterparts via the quasi-classical limit. Rather,  we will 
establish them by introducing certain involutions on the variety of upper triangular 
totally positive matrices. These involutions are closely related to the 
BFZ twist -- a transformation of totally positive matrices
considered in \cite{BFZ} and, as we will show below, they are also
related to the tetrahedron equation and the action of the 
symmetric group~$S_3$.

\subsection{Rogers dilogarithm}
 
Recall that, for $x \in (0,1)$, the Rogers dilogarithm is defined   by 
\begin{align}
L (x)  = Li_2(x) + \frac{1}{2} \log x\, \log(1-x) .
\end{align} 
In the present paper, we will use the following 
closely related function:
\begin{align*}
 l(x)  = \frac{6}{\pi^2}\, L\Bigl(\frac{x}{1+x}\Bigr) ,
\qquad\qquad \, x>0 \,.
\end{align*} 
The well-known functional relations 
for the Rogers dilogarithm (see, e.g. \cite{Kir}) 
acquire the following form for $l(x)$:
\begin{eqnarray} 
\label{rd1}
&\displaystyle 
 l(x) + l\bigl( 1/x \bigr) = 1 \,, &  \\[2mm] 
\label{pent} 
&\displaystyle  
 l(x) + l(y) \ =\ l\Bigl(\frac{x}{1+y}\Bigr) 
+ l\Bigl(\frac{x y}{1+x+y}\Bigr) + l\Bigl(\frac{y}{1+x}\Bigr)  \,. &
\end{eqnarray}
The latter identity is usually referred to as the pentagon relation.
It has an obvious ${\mathbb Z}_2$ symmetry in the sense that (\ref{pent})
is invariant under the change of variables $x \leftrightarrow y$.

\subsection{Totally positive matrices}

Let $B_n$, $n \geq 2$, denote the group of $n\,{\times}\,n$ 
  real invertible upper triangular matrices. Let $N_n$ be the unipotent
subgroup of $B_n$.  
Adopting the terminology of \cite{BFZ}, we define  
totally positive (with respect to $B_n$ or $N_n$) matrices as follows.

\begin{defn}
$G \in B_n$ is said to be a {\em totally positive} matrix if every minor of $G$ 
that does not 
vanish identically is positive. $B^+_n$ and $N^+_n$ are, respectively,  
the subvarieties of $B_n$ and $N_n$ consisting of totally positive matrices.  
\end{defn}

For $x \in {\mathbb R}$, we set $J_k(x) = \exp ( x e_k) \in N_n$, where  
$e_k$ is the matrix unit such that 
$ (e_k)_{ij} = \delta_{ik} \delta_{j,k+1}$.  

\begin{lem}[\cite{BFZ}, Prop.\,1.7]\label{Jlem}  
If $M \in N_n$ is given by
\begin{align}\label{Mjac}
  M = {\mathbb J}_{1} \ldots {\mathbb J}_{n-1} \,, 
   \qquad \text{where} \quad  
   {\mathbb J}_{k} = J_k(x_{1,k+1}) \ldots J_1(x_{k,k+1}) \,,
\end{align}
and $x_{ij}$, $1 \leq i < j \leq n$, are positive real numbers, then $M$
is totally positive. Moreover,   every $M \in N^+_n$ admits 
 a unique representation of the form (\ref{Mjac}). 
\end{lem}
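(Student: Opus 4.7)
The lemma has three independent assertions: (a) any matrix of the form (\ref{Mjac}) with positive $x_{ij}$ lies in $N_n^+$; (b) every $M \in N_n^+$ admits such a factorization; (c) the factorization is unique. A dimension count is reassuring at the outset: the space of tuples $(x_{ij})_{1 \leq i < j \leq n}$ with $x_{ij} > 0$ and the variety $N_n^+$ both have dimension $\binom{n}{2}$, so the map $\phi \colon (x_{ij}) \mapsto M$ defined by (\ref{Mjac}) can plausibly be a diffeomorphism between them.

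For (a), I would appeal to the planar-network interpretation of products of Jacobi factors. Each $J_k(x)$ corresponds to a weighted chip in a planar directed acyclic graph on $n$ horizontal rails, with weight $x$; the entry $M_{ij}$ then equals the weighted sum over directed source-to-sink paths from level $i$ to level $j$, and by the Lindström--Gessel--Viennot lemma any minor of $M$ equals the weighted sum over families of non-intersecting path systems. The ordering prescribed in (\ref{Mjac}) corresponds to a reduced word for the longest element $w_0$ of $S_n$; standard combinatorics of such words then ensures that every minor of $M$ which does not vanish identically in the $x_{ij}$ is a polynomial in the $x_{ij}$ with nonnegative coefficients and at least one nonzero term, hence strictly positive when the $x_{ij}$ are positive.

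For (b) and (c), I would induct on $n$ and peel the factorization from the right. The base case $n = 2$ is immediate since $N_2^+$ consists precisely of matrices $J_1(x)$ with $x > 0$. For the inductive step, observe that $\mathbb{J}_{n-1}$ is the only block on the right-hand side of (\ref{Mjac}) that acts nontrivially on the last coordinate, while $\mathbb{J}_1, \ldots, \mathbb{J}_{n-2}$ all fix $e_n$. Consequently the parameters $x_{1n}, \ldots, x_{n-1,n}$ are determined by a specific system of equations in the last column of $M$, or equivalently by explicit ratios of solid minors of $M$ whose index sets involve the last column. Total positivity of $M$ makes these ratios strictly positive, so $\mathbb{J}_{n-1}$ is recovered uniquely with positive parameters. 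One then verifies that $M \cdot \mathbb{J}_{n-1}^{-1}$ is block-diagonal, with an $(n-1) \times (n-1)$ totally positive unipotent block in the upper-left corner and a trailing $1$; the inductive hypothesis supplies the remaining factorization $\mathbb{J}_1 \cdots \mathbb{J}_{n-2}$ uniquely and with positive parameters.

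The main obstacle is identifying the correct minor-ratio formulas for the peeling step (a ``chamber ansatz'' tailored to the reduced word underlying (\ref{Mjac})) and verifying positivity of those ratios on $N_n^+$. This is a purely combinatorial point but it depends crucially on the specific staircase ordering $\mathbb{J}_k = J_k(x_{1,k+1}) \ldots J_1(x_{k,k+1})$; a different reduced word would produce a different but structurally analogous collection of formulas.
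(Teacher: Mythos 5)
First, a point of reference: the paper does not prove this lemma at all --- it is quoted verbatim from \cite{BFZ} (Prop.~1.7) --- so there is no in-paper argument to compare against. On its own terms, your overall plan is the standard and correct route, and it matches what \cite{BFZ} actually does: the Lindstr\"om--Gessel--Viennot planar-network argument for the claim that the image of the parametrization lies in $N_n^+$, and ``chamber ansatz'' minor formulas for existence and uniqueness of the factorization. In fact the paper's Appendix records exactly the formulas you say you are missing: equation (\ref{mab}) expresses the solid right flag minors as monomials, $\Delta_{[a,b]}(M)=\prod_{i=1}^{b-a+1}\prod_{j=b+1}^{n}x_{ij}$, and its inversion (\ref{jcmm}) recovers each $x_{ij}$ as a ratio of four such minors, manifestly positive on $N_n^+$.

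Two concrete defects remain. (i) The assertion that $x_{1n},\ldots,x_{n-1,n}$ are ``determined by a specific system of equations in the last column of $M$'' is false. Writing $\mathbb{J}_{n-1}=J_{n-1}(x_{1n})J_{n-2}(x_{2n})\cdots J_1(x_{n-1,n})$, right multiplication by $\mathbb{J}_{n-1}$ performs the column operations $\mathrm{col}_j\mapsto \mathrm{col}_j+x_{n+1-j,n}\,\mathrm{col}_{j-1}$ for $j=n,n-1,\ldots,2$ (each using the not-yet-modified column $j-1$), so the last column of $M$ involves only $x_{1n}$ among the new parameters; the others enter through columns $2,\ldots,n-1$, and their recovery genuinely requires minors of all sizes, as in (\ref{jcmm}). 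The second half of your sentence (ratios of solid minors occupying the last columns) is the correct statement. (ii) More seriously, the two steps you yourself flag as ``the main obstacle'' --- identifying the minor-ratio formulas, proving their positivity on $N_n^+$, and showing that the peeled block $M\,\mathbb{J}_{n-1}^{-1}$ is again totally positive --- are precisely the mathematical content of the surjectivity and uniqueness claims; as written, the proposal asserts them rather than proves them. Formulas (\ref{mab})--(\ref{jcmm}) would close the first two of these gaps (and make the induction unnecessary for uniqueness, since the Jacobi coordinates are then globally determined by $M$), but total positivity of the peeled $(n-1)\times(n-1)$ block, or equivalently surjectivity of the parametrization onto $N_n^+$, still needs an argument --- e.g.\ the criterion of \cite{BFZ} (Theorem~1.8) that positivity of all flag minors suffices, or a second LGV computation on the truncated network. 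So the plan is sound, but it is a plan, not yet a proof.
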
 
The order of the factors  $J_k$ in (\ref{Mjac}) corresponds
to the lexicographically minimal reduced word  for the maximal length element 
$w_0$ of the symmetric group $S_{n}$.  
Factorizations of totally positive matrices
 corresponding to arbitrary minimal reduced words have been studied in~\cite{BFZ}.
We will refer to $x_{ij}$ in (\ref{Mjac})  as the Jacobi coordinates of $M$.

Note that every $G \in B_n^+$ is uniquely represented as 
$G = M\, \Lambda$, where   $M \in N_n^+$ and $\Lambda$ is a
diagonal matrix with positive diagonal entries. 

\section{Relation of a dilogarithm identity to 4$\times$4 positive matrices }

\subsection{An $S_3$-invariant dilogarithm identity}

Consider the following function in three variables:
\begin{align}\label{defF} 
{}& F(x,y,z)  = 
   l\Bigl(\frac{x}{1+y}\Bigr)   
 +  l\Bigl(\frac{(1+x+y)z}{(1+x)(1+y)}\Bigr) + 
 l\Bigl(\frac{xy}{(1+x+y)(1+z)}\Bigr)+ 
 l\Bigl(\frac{y}{1+x} \Bigr)  .
\end{align} 
Note that the arguments of the dilogarithms in (\ref{defF}) are positive
 if $x,y,z>0$ or if $x,y,z<-1$.

Our starting point is the observation that this function
 is $S_3$-invariant in the following sense.
\begin{prop}\label{FFFF}
$F(x,y,z)$ is invariant under any permutation of its arguments,
\begin{align}\label{Fcycl1} 
  F(x,y,z) = F(y,x,z) = F(x,z,y) = F(z,y,x) = F(y,z,x) = F(z,x,y) \,.
\end{align}   
\end{prop}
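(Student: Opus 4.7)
My plan is to rewrite $F(x,y,z)$ in a form where the $S_3$ symmetry is manifest by applying the pentagon relation (\ref{pent}) twice. First, I would rearrange (\ref{pent}) into the more useful shape
\begin{align*}
l\Bigl(\frac{a}{1+b}\Bigr) + l\Bigl(\frac{b}{1+a}\Bigr) = l(a) + l(b) - l\Bigl(\frac{ab}{1+a+b}\Bigr).
\end{align*}

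The first application is to the outer pair of $F$: taking $(a,b) = (x,y)$ converts $l(x/(1+y)) + l(y/(1+x))$ into $l(x) + l(y) - l(xy/(1+x+y))$. The key observation, and probably the only delicate step, is that the inner pair of $F$ fits the same pattern with the choice $(a,b) = (xy/(1+x+y),\, z)$: a direct computation shows that $a/(1+b)$ reproduces the third argument of $F$ and $b/(1+a)$ reproduces the second.

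Applying the rewritten pentagon to this inner pair and simplifying $ab/(1+a+b)$ produces
\begin{align*}
l\Bigl(\frac{xy}{1+x+y}\Bigr) + l(z) - l\Bigl(\frac{xyz}{1 + x + y + z + xy + xz + yz}\Bigr).
\end{align*}
Summing the two applications, the $l(xy/(1+x+y))$ terms cancel and I obtain
\begin{align*}
F(x,y,z) = l(x) + l(y) + l(z) - l\Bigl(\frac{xyz}{1+x+y+z+xy+xz+yz}\Bigr).
\end{align*}
The right-hand side involves only the elementary symmetric polynomials in $x,y,z$, so it is manifestly invariant under every permutation of its arguments, which yields (\ref{Fcycl1}). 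The main obstacle is spotting the ansatz $a = xy/(1+x+y)$, $b = z$ that makes the second pentagon application match the two middle arguments of $F$; once this is tried the verification is routine simplification, and I do not expect to need (\ref{rd1}).
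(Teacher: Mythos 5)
Your proposal is correct and is essentially identical to the paper's own proof: the authors also set $t = xy/(1+x+y)$, apply the pentagon relation (\ref{pent}) twice, and arrive at the manifestly symmetric form $F(x,y,z) = l(x)+l(y)+l(z) - l\bigl(xyz/(1+x+y+z+xy+xz+yz)\bigr)$. Your verifications of the two middle arguments and of $ab/(1+a+b)$ check out, and indeed (\ref{rd1}) is not needed.
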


\begin{proof}
Setting $t = \frac{x y}{1+x+y}$ and using  the pentagon
relation (\ref{pent}) twice, we obtain
\begin{align*} 
{}& l(x) + l(y) + l(z) - l\Bigl(\frac{xyz}{1+x+y+z+xy+xz+yz}\Bigr) \\
{}& = l(x) + l(y) + l(z) - l\Bigl(\frac{tz}{1+t+z}\Bigr)  \\
{}& \={pent}  l\Bigl(\frac{x}{1+y}\Bigr) + l\Bigl(\frac{y}{1+x}\Bigr)
+ l(t) + l(z) - l\Bigl(\frac{tz}{1+t+z}\Bigr) \\
{}& \={pent} l\Bigl(\frac{x}{1+y}\Bigr) + l\Bigl(\frac{y}{1+x}\Bigr)
+ l\Bigl(\frac{t}{1+z}\Bigr) + l\Bigl(\frac{z}{1+t}\Bigr)
= F(x,y,z) \,.
\end{align*}
Whence it is obvious that $F(x,y,z)$ is totally symmetric in $x,y,z$.
\end{proof}

Let us note that, in the limit $z \to 0$, identity $F(x,y,z) =F(x,z,y)$ turns into
the pentagon identity~(\ref{pent}).
 
 \subsection{From the dilogarithm identity to totally positive matrices}
 
Remarkably,  the arguments of the dilogarithms in the identities (\ref{Fcycl1})
can be expressed as simple functions (ratios of products) of certain minors 
of a totally positive matrix and its images under two involutive transformations.
 
Hereafter, we will use the notation $[a,b]$ for 
 the set $\{ i \in {\mathbb N} \,|\, a \leq i \leq b\}$.
 
 Let $G \in B_n$.
Let $I$ and $J$ be some subsets of the set $[1,n]$.  
Denote their cardinalities by $|I|$ and $|J|$.
We will use the notation $\Delta_I(G)$ for the right flag minor of $G$  
 corresponding to intersection of the last $|I|$ columns of $G$ 
 with the rows labeled by the set $I$. Analogously, 
$\Delta^J(G)$ will stand for the upper flag minor of $G$ 
corresponding to intersection of the  first $|J|$ rows of $G$ 
with the columns labeled by the set $J$.
 We will omit the dependence of $\Delta_I$ and  $\Delta^J$
on $G$ when it does not lead to a confusion. For convenience,
we set $\Delta_\emptyset=\Delta^\emptyset =1$.

Given a set $I \subset [1,n]$, let $\bar{I}$ stand for the set such that
$\{a\} \in \bar{I}$ iff $\{n+1-a\} \in I$.

\begin{lem}\label{BBc}
For every $G \in B^+_n$, there exist unique $\check{G}, \hat{G} \in B^+_n$
such that
\begin{align}\label{ggc}
   \Delta_I \bigl( \check{G} \bigr) = \Delta_{\bar{I}} (G) , \qquad
   \Delta^J \bigl( \hat{G} \bigr) = \Delta^{\bar{J}} (G)  
\end{align} 
for all $I,J \subset [1,n]$.
\end{lem}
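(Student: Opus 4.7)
The plan is to treat $\check G$ first, and then obtain $\hat G$ via the anti-transpose involution $\tau$ on $B_n^+$ defined by $(\tau(H))_{ij} := H_{n+1-j,\, n+1-i}$. A direct index computation shows $\Delta_I(\tau(H)) = \Delta^{\bar I}(H)$, and consequently, once the existence and uniqueness of $\check H$ is known for every $H \in B_n^+$, the matrix $\hat G := \tau(\check H)$ with $H := \tau(G)$ will satisfy $\Delta^J(\hat G) = \Delta^{\bar J}(G)$ by a short chain of substitutions.

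For the uniqueness of $\check G$, I exploit that an upper triangular matrix is recoverable from a subset of its right flag minors. Expanding the minor on rows $\{i\} \cup [j+1, n]$ and columns $[j, n]$ along the $j$-th column, and using that $G$ vanishes strictly below the diagonal, yields
\begin{align*}
G_{ij} \;=\; \frac{\Delta_{\{i\}\cup[j+1,n]}(G)}{\Delta_{[j+1,n]}(G)}, \qquad 1 \le i \le j \le n ,
\end{align*}
for every $G \in B_n^+$ (the denominator being $\prod_{k=j+1}^n G_{kk} > 0$). Applied to a hypothetical $\check G$, this formula determines each entry from the prescribed values $\Delta_{\bar I}(G)$, which gives uniqueness.

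For existence, I promote the displayed formula to the \emph{definition} of a candidate matrix $\check G \in B_n$: set
$\check G_{ij} := \Delta_{\overline{\{i\}\cup[j+1,n]}}(G) / \Delta_{\overline{[j+1,n]}}(G)$
for $i \le j$ and zero otherwise. Using $\overline{[j+1,n]} = [1,n-j]$ and $\overline{\{i\}\cup[j+1,n]} = [1,n-j] \cup \{n+1-i\}$, both sets index right flag minors that do not vanish identically on $B_n$; hence the ratio is positive on $B_n^+$, and $\check G$ is upper triangular with positive entries on and above the diagonal. It remains to verify $\Delta_I(\check G) = \Delta_{\bar I}(G)$ for \emph{every} $I \subset [1,n]$, not merely for the particular sets entering the construction. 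Once this is granted, total positivity of $\check G$ will follow from the standard fact that positivity of the full system of right flag minors characterizes $B_n^+$.

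The main obstacle is precisely this final verification for arbitrary $I$. The cleanest route I foresee is to argue that the Plücker-type relations satisfied by the right flag minors on $B_n^+$ are invariant, up to consistent signs, under the reindexing $I \mapsto \bar I$; hence the assignment $\Delta_I \mapsto \Delta_{\bar I}(G)$ is a consistent specialization of flag-minor coordinates, and by the uniqueness argument the explicit matrix $\check G$ must realize it for all $I$. A more hands-on alternative is induction on $|I|$, reducing an arbitrary right flag minor of $\check G$ to the ones already controlled by the construction via Laplace expansion combined with the short (three-term) Plücker relations.
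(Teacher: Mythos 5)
Your reduction of $\hat G$ to $\check G$ via the anti-transpose, and your uniqueness argument via the entry formula $G_{ij}=\Delta_{\{i\}\cup[j+1,n]}(G)/\Delta_{[j+1,n]}(G)$, are both correct (the latter is in fact more direct than the paper's uniqueness argument, which factors $G=M\Lambda$ and recovers the Jacobi coordinates of $M$ from corner minors). The problem is existence: you define $\check G$ entrywise and then explicitly defer the verification that $\Delta_I(\check G)=\Delta_{\bar I}(G)$ for \emph{all} $I$, offering only two unexecuted strategies. That deferred step is the entire content of the lemma, and neither sketch closes it. The ``consistent specialization'' route needs much more than the (true) observation that the three-term Pl\"ucker relations are stable under $I\mapsto\bar I$: you would have to know that positive solutions of the Pl\"ucker relations --- suitably normalized in each degree, since Pl\"ucker coordinates are only projective degree by degree --- are exactly the flag-minor vectors of matrices in $B_n^+$, a realizability theorem of essentially the same depth as the lemma itself. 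Uniqueness cannot substitute for this: it only says that \emph{if} a realizing matrix exists, it must be your candidate $\check G$. The induction-on-$|I|$ alternative is likewise only named, not performed.

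The paper sidesteps realizability entirely by \emph{constructing} $\check G$. Writing $G=M\Lambda$ with $M\in N_n^+$, it takes the factorization $PMP=M'PD_{\!\scriptscriptstyle M}M''$ of Lemma~\ref{GM} (which exists because the leading principal minors of $MP$ are nonvanishing flag minors of $M$), and a single application of the Cauchy--Binet formula to $PM=M'\bigl(PD_{\!\scriptscriptstyle M}P\bigr)\bigl(PM''P\bigr)$ --- the second and third factors being lower triangular --- yields $\varepsilon_I\,\Delta_{\bar I}(M)=\Delta^{[1,|I|]}(D_{\!\scriptscriptstyle M})\,\Delta_I(M')$ simultaneously for every $I$. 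Total positivity of $M'$ then follows from the positivity of all its right flag minors together with formula (\ref{jcmm}) and Lemma~\ref{Jlem}, and one sets $\check G=M'\Lambda P\tilde{D}_{\!\scriptscriptstyle M}P$. To salvage your approach you would need either to show that your entrywise candidate coincides with this matrix, or to actually carry out one of your two sketches; as written, the existence half of the lemma is not proved.
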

\noindent
(The proof is deferred to the Appendix.)

Since every $G \in B^+_n$ is determined completely by the set of its 
flag minors $\Delta_I(G)$ or $\Delta^J(G)$  (cf.\ the proof of Lemma~\ref{BBc}),
we conclude that equations (\ref{ggc}) define two maps on $B^+_n$,
namely, $G \to \check{G}$ and $G \to \hat{G}$.  Taking into account that 
$\bar{{\bar{I}}}=I$, we infer that these maps are involutions, i.e. $\check{\check{G}} = G$ 
and $\hat{\hat{G}} = G$. 

Let us introduce a function ${\mathcal L}$ on $B^+_4$ 
 ($\Delta_I$ stands for $\Delta_I(G)$):
\begin{align}\label{LL1}
{}
 {\mathcal L}(G) &=
  l\Bigl(\frac{\Delta_{14} \Delta_{234}}{\Delta_{34}\Delta_{124} }\Bigr) +
l\Bigl(\frac{\Delta_{1} \Delta_{24}}{\Delta_{4}\Delta_{12} }\Bigr) +
l\Bigl(\frac{\Delta_{12} \Delta_{234}}{\Delta_{24}\Delta_{123} }\Bigr) + 
 l\Bigl(\frac{\Delta_{2} \Delta_{34}}{\Delta_{4}\Delta_{23} }\Bigr)  .
\end{align} 

\begin{prop}\label{DilG}
For every $G \in B^+_4$,  the following   dilogarithm identities hold:
\begin{align}\label{Lggg}
      {\mathcal L}\bigl( \check{G} \bigr) = 
     {\mathcal L}\bigl( \hat{G} \bigr) = 4 - {\mathcal L}(G).
\end{align} 
\end{prop}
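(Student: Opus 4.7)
The plan is to express ${\mathcal L}(G)$, ${\mathcal L}(\check G)$, and ${\mathcal L}(\hat G)$ as values of the $S_3$-invariant function $F$ and then reduce (\ref{Lggg}) to a functional identity on $F$. First, I would observe that in each of the four ratios in (\ref{LL1}) the numerator and denominator contain flag minors of matching sizes: the first and third terms are ratios of pairs of $2$- and $3$-element flag minors, the second and fourth of $1$- and $2$-element ones. Writing $G = M\Lambda$ with $M \in N_4^+$ and $\Lambda$ diagonal positive, the $\Lambda$-contributions therefore cancel in each ratio, so ${\mathcal L}(G) = {\mathcal L}(M)$, and the analogous statement holds for $\check G$ and $\hat G$. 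I may therefore assume $G \in N_4^+$ and, by Lemma \ref{Jlem}, parameterize $G$ by its six Jacobi coordinates, expressing all twelve flag minors relevant for (\ref{LL1}) (and the analogous ones for $\check G$, $\hat G$, determined by (\ref{ggc}) through index reversal) as explicit polynomials in these coordinates.

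The key step is to introduce three positive rational functions $x,y,z$ of the Jacobi coordinates so that ${\mathcal L}(G) = F(x,y,z)$, matching each of the four ratios in (\ref{LL1}) with the corresponding argument of $l$ in (\ref{defF}). The first and fourth arguments of $l$ in (\ref{defF}) are swapped by $x\leftrightarrow y$, which suggests fixing $x$ and $y$ by the equations
\begin{align*}
\frac{x}{1+y} = \frac{\Delta_{14}\Delta_{234}}{\Delta_{34}\Delta_{124}}, \qquad \frac{y}{1+x} = \frac{\Delta_{2}\Delta_{34}}{\Delta_{4}\Delta_{23}},
\end{align*}
and then reading $z$ off one of the two middle ratios while verifying consistency with the other. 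Applying the same procedure to $\check G$ and $\hat G$ yields corresponding triples, whose comparison with $(x,y,z)$ I would establish, by repeated use of three-term Pl\"ucker-type identities among the flag minors of $G$, to amount, after permuting entries via Proposition \ref{FFFF}, to the substitution $(x,y,z)\mapsto(-1-x,-1-y,-1-z)$.

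The proof would then be closed by the purely analytic identity
\begin{align*}
F(-1-x,\,-1-y,\,-1-z) \;=\; 4 - F(x,y,z),
\end{align*}
which I would obtain directly from (\ref{defF}) by checking that each of the four arguments of $l$ in $F(-1-x,-1-y,-1-z)$ equals the reciprocal of an argument of $l$ in $F(x,y,z)$ (in reversed order), so that (\ref{rd1}) applied four times gives the claim. For $x,y,z>0$ the triple $(-1-x,-1-y,-1-z)$ lies in the second region of positivity noted after (\ref{defF}), so both evaluations of $F$ make sense. The main obstacle is the coordinate-transformation claim in the previous paragraph: translating the action of $\check{\,}$ and $\hat{\,}$ on flag minors into the substitution $(x,y,z)\mapsto(-1-x,-1-y,-1-z)$ modulo $S_3$ is where the three-term minor identities enter, and where the bulk of the calculation lies.
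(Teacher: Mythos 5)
Your strategy is essentially the one the paper itself follows: reduce to $M\in N_4^+$, produce a triple $(x,y,z)$ with ${\mathcal L}(G)=F(x,y,z)$, identify the triples attached to $\check G$ and $\hat G$ with a permutation of $(-1-x,-1-y,-1-z)$, and close with (\ref{rd1}) and Proposition~\ref{FFFF}. (The paper phrases the last step as ``the arguments for $\check G$, $\hat G$ are the reciprocals of the arguments of $F(z,y,x)$, $F(x,z,y)$,'' which is the same computation as your identity $F(-1-x,-1-y,-1-z)=4-F(x,y,z)$; that identity is correct.) The genuine gap is that the triple $(x,y,z)$ you need does not exist for every $G\in B_4^+$. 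Your two defining equations form a linear system in $x,y$ whose determinant is $1-\alpha\beta$, where $\alpha=\Delta_{14}\Delta_{234}/(\Delta_{34}\Delta_{124})$ and $\beta=\Delta_{2}\Delta_{34}/(\Delta_{4}\Delta_{23})$, and $\alpha\beta=1$ does occur: in Jacobi coordinates this is the codimension-one locus $x_{12}x_{13}=x_{24}x_{34}$, which contains, for instance, the matrix with all $x_{ij}=1$. On that locus no triple with ${\mathcal L}(G)=F(x,y,z)$ exists and your argument says nothing. The paper treats this case separately (there the four arguments in (\ref{LL1}) split into two reciprocal pairs, so all three quantities in (\ref{Lggg}) equal $2$); alternatively one can invoke continuity of all three quantities in (\ref{Lggg}) as functions of $G$ together with density of the generic locus. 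One of these must be added.

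A smaller inaccuracy: you call $x,y,z$ \emph{positive} functions of the Jacobi coordinates, but the solution of the linear system satisfies $x,y,z>0$ only when $x_{24}x_{34}>x_{12}x_{13}$; otherwise it satisfies $x,y,z<-1$, the other region where the arguments of $F$ in (\ref{defF}) are positive (cf.\ the remark after (\ref{xyz})). Since $u\mapsto -1-u$ interchanges the two regions, your concluding step survives verbatim, but the positivity claim as stated is false and should read ``either $x,y,z>0$ or $x,y,z<-1$.'' Finally, note that your plan to read $z$ off one middle ratio and ``verify consistency with the other'' is exactly where the direct computation of the paper lives; it is not automatic and must actually be carried out, but it is routine.
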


\begin{proof} Let $G = M\,\Lambda$, where $\Lambda \in B_4$ is a
diagonal matrix with positive diagonal entries and 
$M \in N^+_4$ is given by
\begin{align}\label{Jac4}
M=J_1(x_{12}) J_2(x_{13}) J_1(x_{23}) 
J_3(x_{14}) J_2(x_{24}) J_1(x_{34}) .
\end{align} 
Set $\delta = x_{24} x_{34} - x_{12} x_{13}$.
In the generic case, i.e. when $\delta \neq 0$, we can introduce  
variables  $x,y,z$ related to the Jacobi
coordinates of $M$ in the following way: 
\begin{align}\label{xyz}
 \frac{z}{1+y} = \frac{x_{12}}{x_{23}} , \qquad 
 \frac{y}{1+x} = \frac{x_{13}}{x_{24}} , \qquad 
 \frac{x}{1+z} = \frac{x_{23}}{x_{34}} \,.
\end{align} 
Indeed, (\ref{xyz}) is a system of three linear equations in $x,y,z$ 
that has a solution if $\delta \neq 0$:
 $x=(x_{12} x_{13} + x_{12} x_{24} + x_{23} x_{24})/\delta$,
etc. Since the terms on the r.h.s. of (\ref{xyz}) are positive, it is
clear that if $x,y,z$ is a solution of this system, then either 
$x,y,z>0$ or $x,y,z < -1$.

For $\delta \neq 0$, a direct computation shows that the arguments of
the dilogarithms in (\ref{LL1}) match those in (\ref{defF}), e.g.
$\Delta_{14}(G) \Delta_{234}(G)/(\Delta_{34}(G)\Delta_{124}(G)) 
 = \frac{x}{1+y}$ etc. Similarly, the arguments of the 
dilogarithms in ${\mathcal L}\bigl( \check{G} \bigr) $ and
${\mathcal L}\bigl( \hat{G} \bigr)$ match (up to a permutation of
the terms in the sum), respectively, 
the reciprocals of the arguments of the 
dilogarithms in $F(z,y,x)$ and $F(x,z,y)$, e.g. 
$\Delta_{14}(\check{G}) \Delta_{234}(\check{G})/
(\Delta_{34}(\check{G})\Delta_{124}(\check{G})) 
=\frac{1+z}{y}$. 
Therefore, taking relation (\ref{rd1}) into account, we have 
\begin{align}\label{fgdel}
 {\mathcal L}(G) = F(x,y,z) , \qquad
 {\mathcal L}\bigl( \check{G} \bigr) = 4 -  F(z,y,x) , \qquad
  {\mathcal L}\bigl( \hat{G} \bigr) = 4 - F(x,z,y) . 
\end{align} 
Combining these equations with identities (\ref{Fcycl1}),
we obtain identities (\ref{Lggg}).

In the special case, i.e. when $\delta = 0$, a direct computation
shows that,  for ${\mathcal L}(G)$, ${\mathcal L}\bigl( \check{G} \bigr)$,
and ${\mathcal L}\bigl( \hat{G} \bigr)$,  the arguments of
the first and the fourth dilogarithms in (\ref{LL1}) are 
reciprocals of each other and so are  the arguments of
the second and the third dilogarithms. By (\ref{rd1}), this implies that,
if $\delta = 0$, we have ${\mathcal L}(G)={\mathcal L}\bigl( \check{G} \bigr)
= {\mathcal L}\bigl( \hat{G} \bigr)=2$ and thus identity (\ref{Lggg})
holds trivially in this case.
\end{proof}

\section{Totally positive matrices,  
tetrahedron equation, symmetric group $S_3$, and dilogarithm identities}

\subsection{Involutions $M'$ and $M''$ on $N_n^+$}

In order to extend identities (\ref{Lggg}) to the case of $n>4$,
we will introduce the counterparts of the maps 
$G \to \check{G}$ and $G \to \hat{G}$ for unipotent upper triangular 
matrices (note that if $M \in N^+_n$, then in general
$\check{M}$ and $\hat{M}$ are not unipotent). 

Let $P$  denote the $n\,{\times}\,n$ permutation
matrix corresponding to the maximal length element $w_0$ of the 
symmetric group $S_n$, i.e. $P_{ij}=\delta_{i+j,n+1}$.

\begin{lem}\label{GM}
For every $M \in N^+_n$, there exist  unique  $M', M'' \in N^+_n$ 
and a unique diagonal matrix $D_{\!\scriptscriptstyle M}$
such that  
\begin{align} \label{gauss}
  P M P = M' P D_{\!\scriptscriptstyle M} M'' .
\end{align} 
\end{lem}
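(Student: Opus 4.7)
The plan is to read (\ref{gauss}) as a Gauss-type/Bruhat decomposition of the lower unipotent matrix $PMP$, and then to upgrade the resulting factors $M'$, $M''$ from $N_n$ to $N^+_n$ by exploiting the total positivity of $M$.

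Since $M$ is upper unipotent, $PMP$ is lower unipotent. Rewriting (\ref{gauss}) as $PMP \in N\cdot P\cdot B$ identifies the asserted factorization as a point in the big Bruhat double coset $B w_0 B$ (with $w_0$ represented by $P$) together with its canonical parameterization. The criterion for a matrix $g$ to lie in $Bw_0 B$ is that the antidiagonal minors $\det g[\{n{-}k{+}1,\ldots,n\},\{1,\ldots,k\}]$ be nonzero for $k=1,\ldots,n$. A direct index swap shows that for $g=PMP$ these minors coincide with the flag minors $\Delta_{[1,k]}(M)$, since reversing both row and column orders contributes equal signs that cancel; and these flag minors are strictly positive because $M\in N^+_n$. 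Hence $PMP\in B w_0 B$, and the Bruhat decomposition produces a unique $M'\in N$ and $b\in B$ with $PMP = M' P b$. Factoring $b = D_M M''$ uniquely as a diagonal matrix times an upper unipotent matrix yields the triple $(M',D_M,M'')$ with the required properties, including uniqueness, as elements of $N_n\times (\text{invertible diagonal})\times N_n$.

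The main obstacle, and the content beyond the standard Bruhat decomposition, is to show $M',M'' \in N^+_n$ and not merely in $N_n$. I would attack this by expressing the Jacobi coordinates of $M'$ and $M''$ in the sense of Lemma~\ref{Jlem} as subtraction-free rational functions of the Jacobi coordinates of $M$. Concretely, Cramer-type inversion applied to (\ref{gauss}) gives the entries of $M'$ and $M''$ as ratios of products of minors of $PMP$, hence of minors of $M$. The short Plücker relations among the minors of the totally positive matrix $M$ can then be used to eliminate apparent cancellations, producing subtraction-free formulas for both the entries and, after reading off via Lemma~\ref{Jlem}, the Jacobi coordinates of $M'$ and $M''$. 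Positivity of these Jacobi coordinates implies, via Lemma~\ref{Jlem} itself, that $M',M'' \in N^+_n$.

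A more conceptual alternative, in line with the introduction's reference to the BFZ twist of \cite{BFZ}, would be to identify $M\mapsto M'$ and $M\mapsto M''$ as compositions of the BFZ twist (known to preserve $N^+_n$) with transparently positivity-preserving operations such as $M\mapsto PM^T P$, so that positivity of $M'$ and $M''$ is inherited automatically from that of $M$.
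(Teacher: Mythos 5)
Your existence-and-uniqueness argument is correct and is essentially the paper's: the paper reads (\ref{gauss}) as an LU decomposition of $MP$, whose leading principal minors are, up to sign, the corner flag minors $\Delta_{[1,k]}(M)>0$, with uniqueness quoted from Horn--Johnson; your Bruhat-cell formulation of the same factorization, justified by the same nonvanishing minors, is equivalent.

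The gap is in the positivity step, which is the real content of Lemma~\ref{GM}. You assert that Cramer-type inversion plus ``short Pl\"ucker relations'' will produce subtraction-free expressions for the Jacobi coordinates of $M'$ and $M''$, but you do not exhibit them, and this is not a routine cancellation: it is precisely the Chamber Ansatz of Berenstein--Fomin--Zelevinsky (Theorem~1.4 of \cite{BFZ}), a theorem the paper itself has to cite when it finally records the resulting formulas (\ref{xmmm}) and (\ref{xxm}) in the proof of Proposition~\ref{SG}. Your sketch also ignores the signs: minors of $PMP$ differ from those of $M$ by factors $(-1)^{|I|(|I|-1)/2}$, and one must check that these cancel against the alternating signs on the diagonal of $D_{\!\scriptscriptstyle M}$. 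The paper's actual positivity argument avoids all of this and rests on one idea your proposal is missing: applying the Binet--Cauchy formula to $MP=(PM'P)D_{\!\scriptscriptstyle M}M''$ and $PM=M'(PD_{\!\scriptscriptstyle M}P)(PM''P)$ yields (\ref{delij}), so every right flag minor $\Delta_I(M')$ and every upper flag minor $\Delta^J(M'')$ equals, up to a positive factor depending only on $|I|$ resp.\ $|J|$, a flag minor $\Delta_{\bar I}(M)$ resp.\ $\Delta^{\bar J}(M)$ of $M$ itself; positivity of all flag minors then gives total positivity via (\ref{jcmm}) together with Lemma~\ref{Jlem}, respectively Theorem~1.8 of \cite{BFZ}. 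Your ``conceptual alternative'' via the BFZ twist ($M''=\eta(PM^tP)$, $M'=P\eta(M)^tP$) would also close the gap, but it too requires actually verifying these identities by matching (\ref{gauss}) against the defining relation of $\eta$, which you have not done.
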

\noindent
(Hereafter, the proofs of lemmas and most of propositions are deferred to the  Appendix). 

Equation (\ref{gauss}) defines two maps on $N^+_n$,
namely, $M \to M'$ and $M \to M''$. They are related to the maps 
$G \to \check{G}$ and $G \to \hat{G}$ on $B^+_n$ as follows: 

\begin{lem}\label{GMB}
Let $\Lambda  \in B_n$ be  a diagonal matrix with positive diagonal entries. \\
a) For $G = M \, \Lambda$, where $M \in N_n^+$, we have
\begin{align} \label{gc}
  \check{G} = M' \, \Lambda \, P \tilde{D}_{\!\scriptscriptstyle M} P .
\end{align}   
b) For $G =\Lambda\,  M$, where $M \in N_n^+$, we have
\begin{align} \label{gh}
  \hat{G} =  \Lambda \, \tilde{D}_{\!\scriptscriptstyle M} M'' .
\end{align} 
In (\ref{gc}) and (\ref{gh}), $\tilde{D}_{\!\scriptscriptstyle M} \in B_n$ is a
diagonal matrix such that 
$(\tilde{D}_{\!\scriptscriptstyle M})_{ii} = |(D_{\!\scriptscriptstyle M})_{ii}|$.
\end{lem}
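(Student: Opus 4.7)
The plan is to reduce both parts of Lemma~\ref{GMB} to the uniqueness assertion of Lemma~\ref{BBc} by verifying the flag-minor defining conditions (\ref{ggc}) for the candidate matrices $M'\Lambda P \tilde{D}_{\!\scriptscriptstyle M} P$ and $\Lambda \tilde{D}_{\!\scriptscriptstyle M} M''$. The preparatory step is to rewrite (\ref{gauss}) as a standard LDU factorization: multiplying on the left by $P$ yields
\begin{equation*}
   MP \;=\; (PM'P)\, D_{\!\scriptscriptstyle M}\, M''\,,
\end{equation*}
and, setting $L := PM'P$ (which is lower unipotent since $M'$ is upper unipotent), $MP = L\, D_{\!\scriptscriptstyle M}\, M''$ is an LDU decomposition of $MP$ with unipotent triangular $L$ and $M''$.

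Next I would extract expressions for the entries of $D_{\!\scriptscriptstyle M}$ and for the flag minors of $M'$, $M''$ by applying Cauchy--Binet to this factorization. Whenever one of the index sets is $[1,k]$, the sum collapses to a single term because $L$ and $M''$ are unipotent triangular. Combined with the observation that the last $k$ columns of $M$ become, after right multiplication by $P$, the first $k$ columns of $MP$ in reversed order (producing a sign $(-1)^{k(k-1)/2}$ in the resulting $k$-minors), this gives
\begin{equation*}
   \prod_{i=1}^{k}(D_{\!\scriptscriptstyle M})_{ii} = (-1)^{k(k-1)/2}\,\Delta^{[n-k+1,n]}(M),
\end{equation*}
and, for $k = |I| = |J|$,
\begin{equation*}
   \Delta_I(M') = \frac{\Delta_{\bar I}(M)}{\Delta^{[n-k+1,n]}(M)}, \qquad
   \Delta^J(M'') = \frac{\Delta^{\bar J}(M)}{\Delta^{[n-k+1,n]}(M)},
\end{equation*}
the sign $(-1)^{k(k-1)/2}$ cancelling in numerator and denominator of the last two formulas.

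Both parts then reduce to direct computations. For (a), $\Lambda P \tilde{D}_{\!\scriptscriptstyle M} P$ is the diagonal matrix whose $i$-th entry is $\Lambda_{ii}|(D_{\!\scriptscriptstyle M})_{n+1-i,n+1-i}|$, so the right flag minor of size $k$ factors as
\begin{equation*}
   \Delta_I\bigl(M'\Lambda P \tilde{D}_{\!\scriptscriptstyle M} P\bigr) = \Delta_I(M')\cdot\prod_{i=1}^{k}\bigl|(D_{\!\scriptscriptstyle M})_{ii}\bigr|\cdot\prod_{j=n-k+1}^{n}\Lambda_{jj}\,.
\end{equation*}
Since $(D_{\!\scriptscriptstyle M})_{ii}$ has sign $(-1)^{i-1}$, the product $\prod_{i=1}^{k}|(D_{\!\scriptscriptstyle M})_{ii}|$ equals $\Delta^{[n-k+1,n]}(M)$, which cancels the denominator in $\Delta_I(M')$, yielding $\Delta_{\bar I}(M)\prod_j\Lambda_{jj} = \Delta_{\bar I}(G)$; Lemma~\ref{BBc} then identifies the candidate as $\check G$ (it lies in $B_n^+$ because $M' \in N_n^+$ is multiplied by a positive diagonal). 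Part (b) is strictly parallel, since $\Lambda \tilde{D}_{\!\scriptscriptstyle M}$ is diagonal with $i$-th entry $\Lambda_{ii}|(D_{\!\scriptscriptstyle M})_{ii}|$ and the formula for $\Delta^J(M'')$ plays the role of the formula for $\Delta_I(M')$.

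The main difficulty is the bookkeeping of signs. The matrix $D_{\!\scriptscriptstyle M}$ is not positive in general, its entries alternating as $\mathrm{sgn}((D_{\!\scriptscriptstyle M})_{ii}) = (-1)^{i-1}$, and one must track the two sign factors of $(-1)^{k(k-1)/2}$ -- one coming from reversing the last $k$ columns of $M$ in $MP$, one from $\prod_{i=1}^{k}\mathrm{sgn}((D_{\!\scriptscriptstyle M})_{ii})$ -- and verify that they combine so that the absolute values $\prod_{i=1}^{k}|(D_{\!\scriptscriptstyle M})_{ii}|$ become the positive flag minors $\Delta^{[n-k+1,n]}(M)$. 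This is precisely what makes $\tilde{D}_{\!\scriptscriptstyle M}$ (rather than $D_{\!\scriptscriptstyle M}$ itself) the correct matrix in (\ref{gc}) and (\ref{gh}).
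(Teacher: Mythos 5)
Your proposal is correct and follows essentially the same route as the paper: the paper likewise applies Cauchy--Binet to the factorization $MP=(PM'P)D_{\!\scriptscriptstyle M}M''$ (equations (\ref{delij})--(\ref{djim}) in the Appendix) to get $\Delta^{[1,|I|]}(\tilde{D}_{\!\scriptscriptstyle M})\,\Delta_I(M')=\Delta_{\bar I}(M)$ and its upper-minor analogue, with the same sign bookkeeping showing that $\mathrm{sgn}\,\Delta^{[1,|I|]}(D_{\!\scriptscriptstyle M})=(-1)^{|I|(|I|-1)/2}$, and then verifies the defining conditions (\ref{ggc}) for the candidates $M'\Lambda P\tilde{D}_{\!\scriptscriptstyle M}P$ and $\Lambda\tilde{D}_{\!\scriptscriptstyle M}M''$, concluding by the uniqueness in Lemma~\ref{BBc}. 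The only cosmetic difference is that the paper proves Lemma~\ref{BBc} and Lemma~\ref{GMB} together (so the uniqueness argument is spelled out there rather than cited), which does not affect the validity of your argument.
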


\begin{prop}\label{SG}
a) The mappings $M \to M'$ and $M \to M''$ are involutions, i.e. the relations
\begin{align} \label{invol}
  (M')' = M \,, \qquad  (M'')''  = M 
\end{align}
hold for every $M \in N^+_n$. \\[0.5mm]
b) If $M \in N_n^+$ is given by (\ref{Mjac}), then 
the Jacobi coordinates of $M', M''$ are given by
\begin{align}\label{xxm}
 x_{ij}(M') =  
  \frac{ \prod_{k=1}^{i-1} x_{k,n+i-j} (M)}%
 { \prod_{k=1}^{i} x_{k,n+1+i-j}(M) } , \qquad
  x_{ij}(M'') =  
 \frac{ \prod_{k=j+1}^{n} x_{j+1-i,k}(M) }%
{ \prod_{k=j}^{n} x_{j-i,k} (M)} \,.
\end{align} 
\end{prop}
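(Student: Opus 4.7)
My plan for part (a) is to transfer the trivially verified involutivity of $G \mapsto \check{G}$ on $B_n^+$ (immediate from Lemma \ref{BBc} since $\bar{\bar{I}} = I$) to the map $M \mapsto M'$ via Lemma \ref{GMB}. Applying Lemma \ref{GMB}(a) to $G = M \in N_n^+$ (with $\Lambda = I$) yields $\check{M} = M' \cdot D_1$, where $D_1 = P \tilde{D}_{\!\scriptscriptstyle M} P$ is diagonal with positive entries; by the unique unipotent-times-diagonal decomposition of matrices in $B_n^+$ noted after Lemma \ref{Jlem}, this identifies $M'$ as the unipotent factor of $\check{M}$. Applying the same lemma a second time, now with $G = \check{M}$ and $\Lambda = D_1$, gives $\check{\check{M}} = (M')' \cdot D_1 \cdot P \tilde{D}_{\!\scriptscriptstyle M'} P$. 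Since $\check{\check{M}} = M$ is unipotent, uniqueness of the decomposition forces $(M')' = M$ (and incidentally $D_1 \, P \tilde{D}_{\!\scriptscriptstyle M'} P = I$). The proof of $(M'')'' = M$ is strictly parallel, applying Lemma \ref{GMB}(b) to $G = I \cdot M$.

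For part (b), my preferred strategy is to express each Jacobi coordinate $x_{ij}(M')$ as a ratio of right flag minors of $\check{M}$, then use (\ref{ggc}) to rewrite it as a ratio of flag minors of $M$ with bar-permuted row sets, and finally expand those flag minors combinatorially in terms of the $x_{k\ell}(M)$ via the path-sum interpretation associated to the lex-minimal factorization (\ref{Mjac}). For the ``staircase'' index sets that arise, the expansion is rigid enough that the ratio collapses directly to the product claimed in (\ref{xxm}). The argument for $x_{ij}(M'')$ is strictly parallel, using upper flag minors, the second identity in (\ref{ggc}), and Lemma \ref{GMB}(b).

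The principal obstacle is part (b). It requires identifying precisely which staircase flag sets are produced by the minor expression for $x_{ij}$, tracking how the bar involution reshuffles them, and verifying that the combined expansion telescopes uniformly in $(i,j)$ to the displayed product over $k$. The combinatorics is rigid once the lex-minimal word is fixed, but the indices shift intricately with $i$ and $j$, so an off-by-one error anywhere derails the telescoping. An alternative, essentially minor-free route is to substitute (\ref{Mjac}) into (\ref{gauss}), conjugate each elementary factor by $P$ (using $P J_k(x) P = J_{n-k}(x)$), and Gauss-eliminate column-by-column to read off $M'$, $M''$, and $D_{\!\scriptscriptstyle M}$; this trades combinatorial bookkeeping for a lengthy but routine matrix computation.
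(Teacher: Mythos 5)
Your proposal is correct in outline but takes a genuinely different route from the paper on both halves. For part (a) the paper never leaves equation (\ref{gauss}): it rewrites $PMP = M'PD_{\!\scriptscriptstyle M}M''$ as a Gauss decomposition of $PM'P$ and reads off $(M')'=M$, $D_{\!\scriptscriptstyle M'}=D_{\!\scriptscriptstyle M}^{-1}$ and $(M')''=D_{\!\scriptscriptstyle M}(M'')^{-1}D_{\!\scriptscriptstyle M}^{-1}$ by uniqueness of the LU factorization; these byproducts are reused later in the proof of Proposition~\ref{MS3}, which is something your transfer through $\check{G}$ does not deliver. Your argument via Lemma~\ref{GMB} and the uniqueness of the unipotent-times-diagonal factorization is nonetheless sound and non-circular (Lemmas~\ref{BBc} and~\ref{GMB} are proved from Lemma~\ref{GM} alone). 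For part (b) the paper imports Theorem~1.4 of \cite{BFZ} (the BFZ twist) to obtain $x_{ij}(M'')$ directly as the ratio $\Delta_{[i,j]}\Delta_{[i+1,j-1]}/(\Delta_{[i,j-1]}\Delta_{[i+1,j]})$ of interval flag minors of $M$, and then handles $M'$ by the sign-matrix trick $x_{ij}(SM^{-1}S)=x_{n+1-j,n+1-i}(M)$; your route instead combines the inversion formula (\ref{jcmm}), the defining relations (\ref{ggc}), and the monomial formula (\ref{mab}), which avoids citing the twist theorem and does telescope as you claim --- I checked that for $x_{ij}(M')$ the four minors that appear are the intervals $\overline{[j-i,j-1]}$, $\overline{[j-i+2,j]}$, $\overline{[j-i+1,j-1]}$, $\overline{[j-i+1,j]}$, the diagonal contributions cancel because the minor sizes match, and (\ref{mab}) collapses the ratio to the stated product. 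The one point your sketch glosses over is that the $M''$ case is not ``strictly parallel'': (\ref{jcmm}) expresses Jacobi coordinates through \emph{right} flag minors, while the second identity in (\ref{ggc}) transforms \emph{upper} ones, and upper flag minors with interval column sets are not monomials (already $\Delta^{\{2\}}=x_{12}+x_{23}$ for $n=3$). You need the bridge $\Delta_{[a,b]}=\Delta^{[1,a-1]\cup[a+n-b,n]}$ (the $[c,n]=\emptyset$ case of the paper's (\ref{upr}), elementary for triangular matrices) to pass from right flag minors of $M''$ to prefix-suffix upper flag minors, apply (\ref{ggc}), and convert back to interval right flag minors of $M$; with that inserted, the computation reproduces the paper's (\ref{xmmm}) and then (\ref{mab}) finishes. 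In short: the paper's proof is shorter because it leans on \cite{BFZ}, yours is more self-contained but must carry this extra identity and heavier index bookkeeping.
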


Let us remark that we have defined the maps $M \to M'$, $M \to M''$  by
the relation (\ref{gauss}) that is similar in spirit to the relation 
$P z^t = v^t \, d \, u$, where $u,v,z \in N^+_n$, $d$ is a diagonal matrix,
$t$ stands for the matrix transposition.
The latter relation was used in \cite{BFZ} in order to define the map
$z \to \eta(z)=u$ (called the BFZ twist). It is not difficult to check how
these maps are related: 
$M' = P( \eta(M) )^t P$, $M'' = \eta(P M^t P)$. We would like 
to stress that, unlike the BFZ twist,  the maps $M \to M'$, $M \to M''$
are involutions. 
This fact will be very important below in the context of the $S_3$ symmetry.

\subsection{Involutions $\bar{M}'$ and $\bar{M}''$ and 
the tetrahedron equation}

Let $M \to \bar{M}$ denote the involution on $N_n^+$ such that
\begin{align}\label{Mbar}
    x_{ij}(\bar{M}) = \frac{1}{x_{ij}(M)} 
\end{align}
for all $1 \leq i < j \leq n$.
Equations (\ref{xxm}) imply that $(\bar{M})'= \overline{(M')}$ and 
$(\bar{M})''= \overline{(M'')}$, so that we can write simply
$M \to \bar{M}'$ and $M \to \bar{M}''$ for the corresponding mappings.

According to equation (\ref{Mjac}), matrix entries of $M \in N^+_n$ are functions in
$n(n-1)/2$ variables $x_{12},\ldots,x_{n-1,n}$. Consider the  
changes of these variables, $L_{abc}$ and $R_{abc}$, where 
$1 \leq a < b < c \leq n$, that affect only the variables $x_{ab}$,
 $x_{ac}$, and $x_{bc}$ and are given by
 \begin{align}
 \label{Labc}
 {}&    L_{abc}(x_{ab}) = x_{ac} , \ \ L_{abc}(x_{ac}) = x_{ab}, \ \ 
     L_{abc}(x_{bc}) = \frac{ x_{ac} x_{bc} }{ x_{ab} }, \\
\label{Rabc}     
{}&    R_{abc}(x_{ab}) = \frac{ x_{ab} x_{ac} }{ x_{bc} } , \ \ R_{abc}(x_{ac}) = x_{bc}, \ \ 
     R_{abc}(x_{bc}) = x_{ac} .
\end{align}

It follows from (\ref{xxm}) that for $M \in N_3^+$ (considered, according to (\ref{Mjac}),
as a matrix function in the variables $x_{12}$, $x_{13}$, and $x_{23}$) we have 
\begin{align}\label{MMlr}
      \bar{M}' = L_{123} (M) , \quad \bar{M}'' = R_{123} (M).
\end{align}

Furthermore, for $n=4$, we have the following statement:

\begin{prop}\label{TE} 
Let $M \in N_4^+$ be given by (\ref{Jac4}) and thus it is considered  
as a matrix function in the variables $x_{12}$, \ldots, $x_{34}$. Then
\begin{align}
\label{MMl1}
{}&  L_{123} ( L_{124} ( L_{134}( L_{234} (M)))) = 
  L_{234} ( L_{134} ( L_{124}( L_{123} (M)))) = \bar{M}' , \\
\label{MMr1}
{}& R_{123} ( R_{124} ( R_{134}( R_{234} (M)))) = 
  R_{234} (  R_{134} ( R_{124}( R_{123} (M)))) = \bar{M}'' .
\end{align}
\end{prop}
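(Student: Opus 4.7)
The plan is to establish both identities in (\ref{MMl1}) and (\ref{MMr1}) by direct, explicit computation on the six Jacobi coordinates $x_{12},\ldots,x_{34}$, using Proposition \ref{SG}(b) to identify the target $\bar M'$ (and $\bar M''$). I would split the proof into two parts: first compute the Jacobi coordinates of $\bar M'$ and $\bar M''$ from the formulas (\ref{xxm}) and (\ref{Mbar}); then apply each of the four maps $L_{abc}$ (respectively $R_{abc}$) in order and check that the composition produces exactly those coordinates.

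First, specializing (\ref{xxm}) to $n=4$ and inverting all six entries by (\ref{Mbar}) yields closed monomial expressions for $x_{ij}(\bar M')$ and $x_{ij}(\bar M'')$ in terms of the $x_{ab}$. For example, for $\bar M'$ one gets
$x_{12}(\bar M')=x_{14}$, $x_{13}(\bar M')=x_{13}x_{34}/x_{14}$, and similar monomials for the other four coordinates. These formulas are the targets against which the two compositions in (\ref{MMl1}) must be compared, and the corresponding list for $\bar M''$ serves the same purpose for (\ref{MMr1}).

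Next, I would compute the composition $L_{234}\circ L_{134}\circ L_{124}\circ L_{123}$ step by step. Because $L_{abc}$ only affects the three variables $x_{ab},x_{ac},x_{bc}$, it is convenient to record the state after each of the four steps as a six-tuple. The first map alters $\{x_{12},x_{13},x_{23}\}$ via (\ref{Labc}), leaving the three variables with index $4$ intact; the second then alters $\{x_{12},x_{14},x_{24}\}$, now with $x_{12}$ already replaced; etc. After four applications one collects the final expression for each coordinate and matches it against the list from step one. The reverse ordering $L_{123}\circ L_{124}\circ L_{134}\circ L_{234}$ is handled in exactly the same way; the agreement of the two reduced expressions and the identity $\bar M'$ proves (\ref{MMl1}). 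The identity (\ref{MMr1}) for $R_{abc}$ follows by an entirely parallel computation, using (\ref{Rabc}) and the formula for $x_{ij}(\bar M'')$.

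The main obstacle is purely bookkeeping: each composition involves four substitutions that share variables, so the intermediate expressions grow before simplifying to the compact monomials dictated by step one. I expect that in practice the simplifications after each step keep the complexity bounded — after the first two maps one already sees a pattern like $x_{ij}\mapsto\prod x_{k\ell}^{\pm1}$ with small exponents — but carrying this through cleanly is the only real labor. A clean approach is to verify both $L$-compositions agree with $\bar M'$ entry by entry (rather than first proving they equal each other and then computing the common value), since doing both in one pass uses the same intermediate tables.
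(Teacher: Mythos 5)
Your plan coincides with the paper's own proof, which records the result of exactly such ``a straightforward computation'': one computes the Jacobi coordinates of $\bar{M}'$ and $\bar{M}''$ from (\ref{xxm}) and (\ref{Mbar}) and checks, coordinate by coordinate, that both orderings of the four $L$'s (resp.\ $R$'s) reproduce them. One small correction to your illustrative target list before you carry this out: from (\ref{xxm}) with $n=4$ one gets $x_{13}(M')=1/x_{13}$, hence $x_{13}(\bar{M}')=x_{13}$ rather than $x_{13}x_{34}/x_{14}$; the full list is $x_{12}(\bar{M}')=x_{14}$, $x_{13}(\bar{M}')=x_{13}$, $x_{14}(\bar{M}')=x_{12}$, $x_{23}(\bar{M}')=x_{14}x_{24}/x_{13}$, $x_{24}(\bar{M}')=x_{13}x_{23}/x_{12}$, $x_{34}(\bar{M}')=x_{14}x_{24}x_{34}/(x_{13}x_{23})$.
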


\begin{proof} A straightforward computation.
\end{proof}

The first equalities in (\ref{MMl1}) and (\ref{MMr1}) imply that 
the transformations $L_{abc}$ and $R_{abc}$ satisfy the tetrahedron 
equation. In the context of the quantum tetrahedron equation, 
this fact was used in \cite{KV2} and later in \cite{BV1}.
A comparison of the second equation in (\ref{xxm}) and equations (\ref{Rabc}) 
with, respectively,  the first equation in (A.35) and equation (42) in \cite{BV1} shows that
relations (\ref{MMl1}) and (\ref{MMr1}) can be generalized to the case of
arbitrary $n$ by replacing the composition of four $L$'s or $R$'s
by the lexicographically ordered composition of $n \choose 3$ $L$'s or $R$'s
(cf. equation (15) in~\cite{BV1}).

\subsection{Involutions $M'$ and $M''$ and the symmetric group $S_3$}

\begin{defn}
$\tilde{N}^+_n$ is the subvariety of $N^+_n$
consisting of matrices $M$ whose Jacobi coordinates satisfy the
following relations 
\begin{align} \label{con}
   \prod_{k=i+1}^n \frac{x_{ik} (M) }{x_{n+1-k,n+1-i} (M)} =
     \prod_{k=1}^{i-1}  \frac{x_{ki} (M)}{x_{n+1-i,n+1-k} (M)}
\end{align}
for all $1 \leq i < n/2$.
\end{defn}
Let us remark that, using formula (\ref{mab}), we can rewrite relation (\ref{con})
as a condition on the corner minors of $M$:\ 
$\Delta_{[1,i]} (M) \, \Delta_{[1,n+1-i]} (M) = \Delta_{[1,i-1]} (M) \, \Delta_{[1,n-i]} (M)$.

\begin{prop}\label{MS3}
a) $\tilde{N}^+_n$ is invariant under the mappings $M \to M'$, $M \to M''$. That is,
 if $M \in \tilde{N}^+_n$, then $M', M'' \in \tilde{N}^+_n$.\\[0.5mm]
b) For every $M \in \tilde{N}^+_n$, the following relation holds:
\begin{align} \label{Mrho}
  \bigl( (M')'' \bigr)' = \bigl( (M'')' \bigr )'' .
\end{align}
\end{prop}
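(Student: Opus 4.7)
The plan is to treat parts (a) and (b) separately, using in (a) the corner-minor reformulation of the defining relations (\ref{con}) given immediately after the definition of $\tilde{N}^+_n$, and in (b) the explicit Jacobi coordinate formulas (\ref{xxm}).

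For part (a), I would work throughout in the corner-minor form: $M \in \tilde{N}^+_n$ is equivalent to
\[ \Delta_{[1,i]}(M)\,\Delta_{[1,n+1-i]}(M) = \Delta_{[1,i-1]}(M)\,\Delta_{[1,n-i]}(M) \quad \text{for } 1 \leq i < n/2. \]
The core step is to show
\[ \Delta_{[1,j]}(M') = \Delta_{[1,j]}(M'') = \frac{1}{\Delta_{[1,j]}(M)} \]
for every $j$, from which the condition on $M$ is manifestly equivalent to those on $M'$ and $M''$. This identity should drop out of Lemma~\ref{GMB}. Setting $\Lambda=I$ in Lemma~\ref{GMB}(a) gives $\check{M} = M'\,(P\tilde{D}_M P)$; on the left the right flag minor $\Delta_{[1,j]}(\check{M})$ equals $\Delta_{[n+1-j,n]}(M)=1$, the bottom-right $j\times j$ principal minor of a unipotent matrix, while evaluating the same minor on $M'\,(P\tilde{D}_M P)$ gives $\Delta_{[1,j]}(M')\prod_{k=1}^{j}|(D_M)_{kk}|$. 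The analogous argument with $\hat{M}=\tilde{D}_M M''$ from Lemma~\ref{GMB}(b) supplies the companion statement for $M''$. Finally, evaluating the upper flag minor $\Delta^{[1,j]}(\hat{M})=\Delta^{[n-j+1,n]}(M)=\Delta_{[1,j]}(M)$ yields the missing link $\prod_{k=1}^{j}|(D_M)_{kk}|=\Delta_{[1,j]}(M)$, completing the derivation.

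For part (b), I would directly compute the Jacobi coordinates of $\bigl((M')''\bigr)'$ and $\bigl((M'')'\bigr)''$ by iterating the formulas (\ref{xxm}) three times on each side. Every application of $'$ or $''$ rewrites each $x_{ij}$ as a ratio of products of Jacobi coordinates of the previous matrix, so after three iterations both sides become rational monomials in the variables $x_{kl}(M)$. The strategy is to simplify both expressions to a common form, invoking the constraints (\ref{con}) precisely where needed to match the remaining factors on the two sides. The uniqueness of the Jacobi factorization in Lemma~\ref{Jlem} then upgrades equality of coordinates to equality of matrices.

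The main obstacle will be bookkeeping: after three iterations of (\ref{xxm}) the index shifts accumulate and the products grow substantially, so pinpointing the exact places where (\ref{con}) produces the needed cancellation is the crux of the computation. A potentially cleaner alternative is to argue at the matrix level by applying (\ref{gauss}) three times on each side; part~(a) already suggests that on $\tilde{N}^+_n$ the diagonal factors obey $|(D_{M'})_{kk}| = |(D_{M''})_{kk}| = |(D_M)_{kk}|^{-1}$, so one might hope to reduce (\ref{Mrho}) to a pure diagonal identity and then close the argument by invoking the uniqueness of the decomposition (\ref{gauss}).
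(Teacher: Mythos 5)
Your part (a) is correct and takes a genuinely different route from the paper's. The paper simply forms the ratio $Q_i$ of the two sides of (\ref{con}) and verifies $Q_i(M')=Q_i(M'')=1/Q_i(M)$ by a direct computation with the coordinate formulas (\ref{xxm}); you instead establish $\Delta_{[1,j]}(M')=\Delta_{[1,j]}(M'')=1/\Delta_{[1,j]}(M)$ via Lemma~\ref{GMB} (equivalently, via (\ref{delij}) and (\ref{vre})) and read off the invariance from the corner-minor form of (\ref{con}). This avoids coordinate bookkeeping entirely, and every individual minor evaluation you cite checks out; it is a perfectly good, arguably cleaner, substitute for the paper's argument.

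Part (b), however, is not a proof: both of your strategies are left as plans, and the step you yourself flag as ``the crux'' --- where exactly the constraint (\ref{con}) produces the needed cancellation --- is precisely what is missing. (The paper concedes that the triple iteration of (\ref{xxm}) works but is too tedious, and gives a matrix-level proof instead.) That proof, which is your ``cleaner alternative'' carried to completion, goes as follows: from (\ref{ga1}) one computes $\bigl((M')''\bigr)'=PD_{\!\scriptscriptstyle M}P\,M^{-1}\,PD_{\!\scriptscriptstyle M}^{-1}P$ and, similarly, $\bigl((M'')'\bigr)''=D_{\!\scriptscriptstyle M}^{-1}M^{-1}D_{\!\scriptscriptstyle M}$, so (\ref{Mrho}) holds iff $M$ commutes with the diagonal matrix $(D_{\!\scriptscriptstyle M}P)^2$; since all entries of $M$ above the diagonal are positive, this forces $(D_{\!\scriptscriptstyle M}P)^2$ to be a scalar matrix, which together with $\det(D_{\!\scriptscriptstyle M}^2)=1$ is equivalent to $(D_{\!\scriptscriptstyle M})_{ii}(D_{\!\scriptscriptstyle M})_{n+1-i,n+1-i}=(-1)^{n+1}$, i.e., by (\ref{dmk}), to (\ref{con}). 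Note that the closing move is not the uniqueness of the decomposition (\ref{gauss}) nor a ``pure diagonal identity'' as you suggest, but a commutation-plus-positivity argument identifying when the two conjugates of $M^{-1}$ coincide. Also, the relation $|(D_{\!\scriptscriptstyle M'})_{kk}|=|(D_{\!\scriptscriptstyle M''})_{kk}|=|(D_{\!\scriptscriptstyle M})_{kk}|^{-1}$ that you hope to exploit holds on all of $N^+_n$ (see the proof of Proposition~\ref{SG}), not just on $\tilde N^+_n$, so by itself it cannot be the mechanism through which (\ref{con}) enters.
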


Let $\sigma_1$ and $\sigma_2$ denote the generators of the symmetric group $S_3$
satisfying the relations $\sigma_1 \sigma_1 = \sigma_2 \sigma_2 = id$,
$\sigma_1 \sigma_2 \sigma_1 = \sigma_2 \sigma_1 \sigma_2$.
Proposition~\ref{MS3}, along with the part a) of Proposition~\ref{SG},  implies 
 that the action of the group $S_3$ on
$\tilde{N}^+_n$ given by 
\begin{equation}\label{s3nt}
\begin{aligned} 
{}    id(M)  &=M , \quad \sigma_1(M)=M', \quad \sigma_2(M)=M'', \quad
 (\sigma_2  \sigma_1)(M)=(M')'' , \\
{}  (\sigma_1   \sigma_2)(M)  &= (M'')'  , \quad 
 (\sigma_1  \sigma_2   \sigma_1)(M)=((M')'')' , \quad 
  (\sigma_2   \sigma_1  \sigma_2)(M)=((M'')')''
\end{aligned}
\end{equation}
is a homomorphism from $S_3$ to the automorphism group of $\tilde{N}^+_n$.  

Although relations (\ref{s3nt}) do not define a group action of $S_3$ on ${N}^+_n$
(since (\ref{Mrho}) does not hold in general if $M \in {N}^+_n$), we will not restrict our
 consideration  to the subvariety $\tilde{N}^+_n$ because  conditions  (\ref{con}) result in
a degeneration of the variables in the dilogarithm identities that we want to obtain. 
For instance, for $n=4$, condition (\ref{con}) reads $x_{12}  x_{13} = x_{24} x_{34}$,
in which case all the three functions in (\ref{Lggg}) become constants (cf.\ the
end of the proof of Proposition~\ref{DilG}).

Note that the variables $x,y,z$ in (\ref{xyz}) depend only
on certain ratios of the Jacobi coordinates $x_{ij}$. This observation helps
to reveal the role of the group $S_3$ for ${N}^+_n$. Consider 
   functions $y_{ij}, \bar{y}_{ij} : N^+_n \to {\mathbb R}$, $1 \leq i < j \leq n-1$,
  given by
\begin{align} \label{yyxx}
    y_{ij}(M) = \frac{x_{ij}(M)}{x_{i+1,j+1}(M)} , 
    \qquad  \bar{y}_{ij}(M) = y_{ij}(\bar{M}) ,
\end{align}
where the map $M \to \bar{M}$ was defined in (\ref{Mbar}).

\begin{prop}\label{SY}
The functions $y_{ij}, \bar{y}_{ij}$ have the following transformation properties:
\begin{eqnarray} \label{my1}
   \displaystyle  y_{ij}(M') =  \bar{y}_{i,n+i-j}(M) , && \quad    
    y_{ij}(M'') = \bar{y}_{j-i,j}(M) ,  \\[1mm]
\label{my2}    
     y_{ij}\Bigl(\bigl( (M')'' \bigr)' \Bigr) =  y_{ij}\Bigl(\bigl( (M'')' \bigr )'' \Bigr) , && \quad
   \bar{y}_{ij}\Bigl(\bigl( (M')'' \bigr)' \Bigr) =  \bar{y}_{ij}\Bigl(\bigl( (M'')' \bigr )'' \Bigr) . 
\end{eqnarray}

\end{prop}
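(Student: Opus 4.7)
The plan is to establish (\ref{my1}) by direct substitution into the explicit formulas (\ref{xxm}), and then to deduce (\ref{my2}) as a formal consequence of (\ref{my1}) together with the elementary identities $(\bar M)' = \overline{M'}$, $(\bar M)'' = \overline{M''}$, and $\bar{\bar M} = M$ already noted above.

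For part (\ref{my1}), write $y_{ij}(M') = x_{ij}(M')/x_{i+1,j+1}(M')$ and substitute the first formula of (\ref{xxm}). The numerator and denominator are each a ratio of two products over consecutive ranges of $k$, so on forming $y_{ij}(M')$ all but two Jacobi coordinates of $M$ telescope away, yielding
\begin{equation*}
 y_{ij}(M') = \frac{x_{i+1,n+i-j+1}(M)}{x_{i,n+i-j}(M)} = \bar{y}_{i,n+i-j}(M).
\end{equation*}
The analogous substitution of the second formula of (\ref{xxm}) gives $y_{ij}(M'') = x_{j-i+1,j+1}(M)/x_{j-i,j}(M) = \bar{y}_{j-i,j}(M)$.

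For part (\ref{my2}), replacing $M$ by $\bar M$ in (\ref{my1}) and using $(\bar M)' = \overline{M'}$, $(\bar M)'' = \overline{M''}$, $\bar{\bar M} = M$ produces the companion formulas
\begin{equation*}
 \bar{y}_{ij}(M') = y_{i,n+i-j}(M), \qquad \bar{y}_{ij}(M'') = y_{j-i,j}(M).
\end{equation*}
These four identities say that the involutions $M \to M'$ and $M \to M''$ permute the family $\{ y_{ij}, \bar{y}_{ij} : 1 \leq i < j \leq n-1 \}$: each one swaps $y$ with $\bar{y}$ and performs a simple reindexing of the pair $(i,j)$. Relation (\ref{my2}) then becomes a purely combinatorial check. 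Applying the four formulas three times in each of the two orders, one finds that $y_{ij}(((M')'')') = y_{ij}(((M'')')'') = \bar{y}_{n-j,n-i}(M)$, and symmetrically $\bar{y}_{ij}(((M')'')') = \bar{y}_{ij}(((M'')')'') = y_{n-j,n-i}(M)$, which is exactly (\ref{my2}).

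The main obstacle is simply the bookkeeping: at each of the three stages one must select the correct one of the four formulas (depending on whether the current function is a $y$ or a $\bar{y}$, and whether the next involution is $'$ or $''$), and the index shift is different in each case. The computations themselves are elementary but require care to keep the two triple compositions aligned so that they can be compared term by term.
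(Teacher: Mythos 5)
Your proposal is correct and follows essentially the same route as the paper: relations (\ref{my1}) are obtained by direct substitution of (\ref{xxm}) into $y_{ij}=x_{ij}/x_{i+1,j+1}$ (the telescoping you describe is exactly what makes them "follow directly"), and (\ref{my2}) is then the same three-step chain of the four swap formulas, landing on $\bar{y}_{n-j,n-i}(M)$ for both compositions. The only cosmetic difference is that you derive the companion formulas $\bar{y}_{ij}(M')=y_{i,n+i-j}(M)$, $\bar{y}_{ij}(M'')=y_{j-i,j}(M)$ via $(\bar M)'=\overline{M'}$, whereas the paper uses the equivalent observation $\bar{y}_{ij}(M)=1/y_{ij}(M)$.
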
  
As a consequence, we can define a group action of $S_3$ on the set of functions
$y_{ij}, \bar{y}_{ij}$: 
$\sigma_1(y_{ij}) = \bar{y}_{i,n+i-j}$, $\sigma_2(y_{ij})  = \bar{y}_{j-i,j}$, 
$\sigma_1(\bar{y}_{ij}) = y_{i,n+i-j}$, $\sigma_2(\bar{y}_{ij}) = y_{j-i,j}$ etc.
This group action, along with Lemma~\ref{GMB}, clarifies, for $n=4$, the link between  
identities (\ref{Lggg}) and (\ref{Fcycl1})
and suggests a way of generalizing identities (\ref{Lggg}) to the case of $n>4$.

\subsection{Involutions $M'$ and $M''$ and dilogarithm identities}

Let us introduce 
four sets of functions on $N^+_n$.
\begin{defn}\label{YDef}
Given $M \in N^+_n$
and a triple of integers $a,b,c$ such that $1 \leq a < b < c \leq n$,
the corresponding Y-variables are given by ($\Delta_I$ stands for $\Delta_I(M)$)
\begin{align}
\label{YYa}
{}& Y_{abc}(M) = 
\frac{\Delta_{[a,b-1]\cup[c+1,n]} \Delta_{[a+1,b]\cup[c,n]} }
  { \Delta_{[a,b]\cup[c+1,n]} \Delta_{[a+1,b-1]\cup[c,n]} } \,, \quad
\tilde{Y}_{abc}(M) = 
\frac{\Delta_{[1,a]\cup[b+1,c-1]} \Delta_{[1,a-1]\cup[b,c]} }
  { \Delta_{[1,a]\cup[b,c-1]} \Delta_{[1,a-1]\cup[b+1,c]} } \,,  \\[2mm]
\label{YYb}  
{}&   Y^{abc}(M) = 
\frac{ \Delta^{[a,b]\cup[c+1,n]} \Delta^{[a+1,b-1]\cup[c,n]} }
  { \vphantom{\Delta^{A^A}} 
  \Delta^{[a,b-1]\cup[c+1,n]} \Delta^{[a+1,b]\cup[c,n]} } \,, \quad
\tilde{Y}^{abc}(M) = 
\frac{ \Delta^{[1,a]\cup[b,c-1]} \Delta^{[1,a-1]\cup[b+1,c]} }
  { \vphantom{\Delta^{A^A}} 
  \Delta^{[1,a]\cup[b+1,c-1]} \Delta^{[1,a-1]\cup[b,c]}  } \,.  
\end{align}
\end{defn}

Below, $\Delta_I^J (M)$ will denote the minor of $M$ corresponding
to   intersection of the rows labeled by the set $I$
and the columns labeled by the set $J$. 

\begin{lem}\label{YYlem}
The Y-variables  satisfy the following relations:
\begin{align}
 \label{yty1}
{}&  Y_{abc}(M) = \tilde{Y}^{a,a+c-b,c}(M)  = 
 \frac{\Delta_{[a,b-1]}^{[a+c-b+1,c]}(M) \, \Delta_{[a+1,b]}^{[a+c-b,c-1]}(M)}%
  {\Delta_{[a,b]}^{[a+c-b,c]}(M) \, \Delta_{[a+1,b-1]}^{[a+c-b+1,c-1]}(M) } , & \\[1mm]
\label{yty2}
{}    
 \tilde{Y}_{abc}(M') =&\, 
 \frac{1}{Y_{n+1-c,n+1-b,n+1-a} (M)^{\vphantom{d^d}} }  , \qquad
 \tilde{Y}^{abc}(M'') = 
 \frac{1}{Y^{n+1-c,n+1-b,n+1-a} (M)^{\vphantom{d^d}} } . &
\end{align} 
\end{lem}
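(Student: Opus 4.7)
My plan is to treat the two parts of the lemma independently, using different ingredients. For (\ref{yty1}) the key point is the unipotent upper-triangular structure of $M$: each flag minor appearing in $Y_{abc}(M)$ or $\tilde{Y}^{a,a+c-b,c}(M)$ collapses to an ordinary minor. Concretely, for any $S \subset [1, c]$, the submatrix of $M$ with rows $S \cup [c+1, n]$ and last $|S| + n - c$ columns $[c - |S| + 1, n]$ is block upper triangular: the block with rows $[c+1, n]$ and columns $[c-|S|+1, c]$ vanishes by upper-triangularity, and the bottom-right block is the unipotent principal submatrix on $[c+1, n]$ with determinant $1$. Hence $\Delta_{S \cup [c+1, n]}(M) = \Delta_S^{[c - |S| + 1, c]}(M)$. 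Applied to each of the four flag minors in $Y_{abc}(M)$, this produces the ordinary-minor ratio on the right of (\ref{yty1}). A dual reduction using the unipotent top-left block $M_{[1, a], [1, a]}$ applies to the upper flag minors in $\tilde{Y}^{a, a+c-b, c}(M)$ and yields the same ordinary-minor ratio, proving the middle equality.

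For (\ref{yty2}), I would set $\Lambda = I_n$ in Lemma~\ref{GMB}, which gives $\check{M} = M' \cdot P \tilde{D}_{\!\scriptscriptstyle M} P$. The matrix $P \tilde{D}_{\!\scriptscriptstyle M} P$ is diagonal with $j$-th diagonal entry $(\tilde{D}_{\!\scriptscriptstyle M})_{n+1-j, n+1-j}$, so right multiplication rescales the columns of $M'$. Since a right flag minor $\Delta_I$ uses the last $|I|$ columns, this yields
\begin{align*}
\Delta_I(M') = \frac{\Delta_I(\check{M})}{\prod_{l=1}^{|I|} (\tilde{D}_{\!\scriptscriptstyle M})_{l, l}} = \frac{\Delta_{\bar{I}}(M)}{\prod_{l=1}^{|I|} (\tilde{D}_{\!\scriptscriptstyle M})_{l, l}},
\end{align*}
the second equality being (\ref{ggc}). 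The four index sets in $\tilde{Y}_{abc}$ have cardinalities $a+c-b-1,\ a+c-b,\ a+c-b,\ a+c-b-1$, so the multisets of cardinalities in numerator and denominator match, and the diagonal factors cancel in the Y-variable ratio. A direct computation of the bar-complements, e.g.\ $\overline{[1, a] \cup [b+1, c-1]} = [n+2-c, n-b] \cup [n+1-a, n]$, then shows that the surviving ratio of ordinary right flag minors of $M$ is precisely the reciprocal of $Y_{n+1-c, n+1-b, n+1-a}(M)$, with numerator and denominator swapped. The $\tilde{Y}^{abc}(M'')$ identity is handled by the parallel argument: $\hat{M} = \tilde{D}_{\!\scriptscriptstyle M} M''$, left multiplication by $\tilde{D}_{\!\scriptscriptstyle M}$ rescales the rows used by an upper flag minor $\Delta^J$ by a factor $\prod_{i=1}^{|J|} (\tilde{D}_{\!\scriptscriptstyle M})_{i, i}$ depending only on $|J|$, the cardinality matching is identical, and the bar-complements identify with the sets in $Y^{n+1-c, n+1-b, n+1-a}(M)$.

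The main obstacle will be the combinatorial bookkeeping: verifying that the cardinalities of the four sets in $\tilde{Y}_{abc}$ and $\tilde{Y}^{abc}$ pair up correctly so that the diagonal factors from Lemma~\ref{GMB} cancel exactly, and checking that the bar-complements reproduce the four sets of $Y_{n+1-c, n+1-b, n+1-a}$ and $Y^{n+1-c, n+1-b, n+1-a}$ with numerator and denominator interchanged, so that the reciprocal identity (\ref{yty2}) emerges rather than some extraneous rearrangement. Once this organization is in place, the remaining computations are routine.
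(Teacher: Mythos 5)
Your proposal is correct and follows essentially the same route as the paper: the block-triangularity collapse of flag minors to ordinary minors (the paper's equation (\ref{upr})) for (\ref{yty1}), and the relation $\Delta^{[1,|I|]}(\tilde{D}_{\!\scriptscriptstyle M})\,\Delta_I(M')=\Delta_{\bar I}(M)$ (which you recover from Lemma~\ref{GMB} with $\Lambda=I_n$, and which the paper invokes directly as (\ref{delij})/(\ref{djim})) together with the cardinality matching $|I_1|=|I_2|$, $|I_3|=|I_4|$ for (\ref{yty2}). The bookkeeping you flag as the remaining obstacle does close exactly as you predict, with the bar-complements of the numerator sets of $\tilde Y_{abc}$ landing on the denominator sets of $Y_{n+1-c,n+1-b,n+1-a}$ and vice versa.
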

It is worth mentioning here that, thanks to relation (\ref{invol}),  we can 
exchange $M' \leftrightarrow M$ in the
first relation in (\ref{yty2}) and  $M'' \leftrightarrow M$ in the second. 

Relation (\ref{yty1}) shows that the set of $Y$-variables 
is somewhat redundant. 
Nevertheless, we will keep both $Y_{abc}$ and 
$\tilde{Y}^{abc}$ in order to treat the right and upper minors
on an equal footing.  In particular, we will need the values of the $Y$-variables
in the following case:

\begin{lem}\label{M0}
Let $x>0$. Let $M_x \in N_n^+$ be given by (\ref{Mjac}), where $x_{ij}=x$
for all $i,j$. The corresponding $Y$-variables do not depend on $x$ and are
given by 
\begin{align}\label{Y0}
Y_{abc}(M_x) =  \tilde{Y}_{abc}(M_x) = \frac{c-b}{b-a} , \qquad
Y^{abc}(M_x) =  \tilde{Y}^{abc}(M_x) = \frac{b-a}{c-b} .
\end{align}
\end{lem}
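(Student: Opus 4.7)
The plan is to compute $Y_{abc}(M_x)$ directly and then use the symmetries from Lemma~\ref{YYlem} to deduce the other three equalities.

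First, by induction on $n$ (using the commutation and braid relations among the factors $J_k(x)$ in (\ref{Mjac})), I would establish that $(M_x)_{ij} = \binom{n-i}{j-i}\,x^{j-i}$ for $1\le i\le j\le n$. This can be recast as the conjugation $M_x = D\,e^{xE}\,D^{-1}$, where $D = \mathrm{diag}\bigl((n-1)!,(n-2)!,\ldots,1!,0!\bigr)$ and $E$ is the superdiagonal shift matrix.

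Next, the conjugation together with the Hankel-type determinant identity
\[
\det\!\bigl(1/(d+l-k)!\bigr)_{k,l=0}^{m} \;=\; \prod_{k=0}^{m} k!/(d+k)!
\]
gives, for any pair of contiguous intervals of equal length,
\[
\Delta_{[p,q]}^{[p+d,q+d]}(M_x) \;=\; x^{(q-p+1)d}\prod_{i=0}^{q-p} \frac{(n-q+i)!\,i!}{(n-q-d+i)!\,(d+i)!}.
\]
The identity (\ref{yty1}) expresses $Y_{abc}(M)$ as a ratio of four such minors; substituting, the $x$-exponents in numerator and denominator both come out equal to $2(b-a)(c-b)$ (so they cancel), and the factorial factors telescope to leave precisely $(c-b)/(b-a)$.

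The value $\tilde{Y}^{abc}(M_x) = (b-a)/(c-b)$ then follows at once from the equality $\tilde{Y}^{abc}(M) = Y_{a,a+c-b,c}(M)$ of (\ref{yty1}). For the remaining two identities I would first observe, via direct substitution into (\ref{xxm}), that $x_{ij}(M_x') = x_{ij}(M_x'') = 1/x$ for all $i,j$, so that $M_x' = M_x'' = \overline{M_x} = M_{1/x}$. The first identity in (\ref{yty2}) applied with $M = M_{1/x}$ (so that $M' = M_x$) then gives
\[
\tilde{Y}_{abc}(M_x) \;=\; 1/Y_{n+1-c,\,n+1-b,\,n+1-a}(M_{1/x}) \;=\; (c-b)/(b-a),
\]
and similarly, the second identity in (\ref{yty2}) applied to $M = M_{1/x}$ and combined with the value of $\tilde{Y}^{abc}(M_x)$ established above yields $Y^{abc}(M_x) = (b-a)/(c-b)$ after relabelling $(a,b,c) \mapsto (n+1-c,n+1-b,n+1-a)$.

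The principal obstacle is the middle step: verifying that, once the four closed-form minors are plugged into (\ref{yty1}), all powers of $x$ really cancel and the factorial products telescope cleanly to the simple rational $(c-b)/(b-a)$, rather than to some more elaborate residue. The remaining steps are essentially bookkeeping exploiting the symmetries already built into Lemma~\ref{YYlem} and formula~(\ref{xxm}).
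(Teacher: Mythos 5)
Your proposal is correct and follows essentially the same route as the paper: the binomial formula $(M_x)_{ij}=\binom{n-i}{j-i}x^{j-i}$, reduction of the relevant minors (via (\ref{upr})/(\ref{yty1}) they are the same $\Delta_{[p,q]}^{[p+d,q+d]}$) to the reciprocal-factorial Toeplitz determinant, cancellation of the $x$-powers and telescoping to $(c-b)/(b-a)$, and then $\tilde Y^{abc}$ via (\ref{yty1}) and the remaining two via $M_x'=M_x''=M_{1/x}$ and (\ref{yty2}). The only difference is that the paper proves the determinant evaluation you quote as known (its Lemma~\ref{Toep}, by Dodgson condensation), and I confirm the telescoping you flagged as the main risk does come out to exactly $(c-b)/(b-a)$.
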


In terms of the $Y$-variables (omitting their 
dependence  on $M$),  formula (\ref{LL1}) reads 
\begin{align} 
\nonumber
 {\mathcal L}(M) = 
l\bigl( Y_{123} \bigr) + l\bigl(Y_{124} \bigr) +
l\bigl(Y_{134}\bigr) +  l\bigl(Y_{234}\bigr) 
=
l\bigl( \tilde{Y}^{123} \bigr) + l\bigl(\tilde{Y}^{134} \bigr) +
l\bigl(\tilde{Y}^{124}\bigr) +  l\bigl(\tilde{Y}^{234}\bigr) .
\end{align} 
A natural generalization of this expression is the sum of dilogarithms whose arguments
are the $Y$-variables labeled by the points  of the discrete tetrahedron $T_n$, $n \geq 3$, 
\begin{align*}
   T_n = \bigl\{ \ \{a,b,c\} \in {\mathbb N}^3 \,\bigm| \,
   1 \leq a < b < c \leq n \bigr\}.
\end{align*}

The following statement is the main result of the present paper:
\begin{thm}\label{Dil12}
For every $M \in N^+_n$, the following chain of
dilogarithm identities holds:
\begin{align}
\label{dil1}
& \sum\limits_{ \{a,b,c\}\in T_n } l\Bigl( Y_{abc} (M') \Bigr) \ =
   \sum\limits_{ \{a,b,c\}\in T_n } l\Bigl( \frac{1}{Y_{abc} (M) } \Bigr) =
    \sum\limits_{ \{a,b,c\}\in T_n } l\Bigl( Y_{abc} (M'') \Bigr)  \\[2.5mm] 
\label{dil2}
=& \sum\limits_{ \{a,b,c\}\in T_n } l\Bigl( \tilde{Y}^{abc} (M') \Bigr) \ =
   \sum\limits_{ \{a,b,c\}\in T_n } l\Bigl( \frac{1}{\tilde{Y}^{abc} (M) } \Bigr) =
    \sum\limits_{ \{a,b,c\}\in T_n } l\Bigl( \tilde{Y}^{abc} (M'') \Bigr)
\\[2.5mm]
\label{dil3}
    =& \sum\limits_{ \{a,b,c\}\in T_n } l\Bigl( \tilde{Y}_{abc} (M') \Bigr) \ =
   \sum\limits_{ \{a,b,c\}\in T_n } l\Bigl( \frac{1}{\tilde{Y}_{abc} (M) } \Bigr) =
    \sum\limits_{ \{a,b,c\}\in T_n } l\Bigl( \tilde{Y}_{abc} (M'') \Bigr) 
\\[2.5mm]
\label{dil4}
=& \sum\limits_{ \{a,b,c\}\in T_n } l\Bigl( {Y}^{abc} (M') \Bigr) \ =
   \sum\limits_{ \{a,b,c\}\in T_n } l\Bigl( \frac{1}{{Y}^{abc} (M) } \Bigr) =
    \sum\limits_{ \{a,b,c\}\in T_n } l\Bigl( {Y}^{abc} (M'') \Bigr).
\end{align} 
\end{thm}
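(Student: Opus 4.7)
The plan is to reduce the chain of twelve equalities to the single pair of identities
\[
{\mathcal L}(M) + {\mathcal L}(M') \,=\, |T_n| \,=\, {\mathcal L}(M) + {\mathcal L}(M''), \qquad
{\mathcal L}(M) \,:=\! \sum_{\{a,b,c\}\in T_n}\! l\bigl(Y_{abc}(M)\bigr),
\]
and to establish these by combining Proposition~\ref{DilG} (the $n=4$ case) with the generalization of the tetrahedron equation described after Proposition~\ref{TE}.

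For the reductions, the bijection $\{a,b,c\}\mapsto\{a,a+c-b,c\}$ on $T_n$ together with (\ref{yty1}) yields $\sum_{T_n} l\bigl(Y_{abc}(N)\bigr) = \sum_{T_n} l\bigl(\tilde{Y}^{abc}(N)\bigr)$ for $N\in\{M,M',M''\}$, collapsing (\ref{dil2}) onto (\ref{dil1}) entry by entry. The bijection $\{a,b,c\}\mapsto\{n+1-c,n+1-b,n+1-a\}$ applied to the two halves of (\ref{yty2}), combined with (\ref{invol}) and (\ref{rd1}), gives $\sum l\bigl(\tilde{Y}_{abc}(M')\bigr) = \sum l\bigl(1/Y_{abc}(M)\bigr)$ and $\sum l\bigl(1/\tilde{Y}_{abc}(M)\bigr) = \sum l\bigl(Y_{abc}(M')\bigr)$, plus the analogous pair relating (\ref{dil2}) to (\ref{dil4}). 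The two remaining entries $\sum l(\tilde{Y}_{abc}(M''))$ and $\sum l(Y^{abc}(M'))$ are handled by iterating the same bijective argument on $M''$ (resp.\ on $M'$) and invoking the main identity. Thus, granting the main identity, all twelve sums collapse to ${\mathcal L}(M')={\mathcal L}(M'')=|T_n|-{\mathcal L}(M)$.

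To prove the main identity for $n>4$, I would invoke the generalized tetrahedron relation: $\bar{M}'$ is produced from $M$ by the lexicographically ordered composition of the $\binom{n}{3}$ transformations $L_{abc}$, each altering only $x_{ab},x_{ac},x_{bc}$. Writing ${\mathcal L}(M)-{\mathcal L}(\bar{M}')$ as a telescoping sum of local contributions, the change under a single $L_{abc}$ is to be identified with an instance of the $S_3$-invariance $F(x,y,z)=F(z,y,x)$ from Proposition~\ref{FFFF}, with $x,y,z$ constructed from $x_{ab},x_{ac},x_{bc}$ as in (\ref{xyz})--(\ref{fgdel}); finally, passing from $\bar{M}'$ to $M'$ via (\ref{Mbar}) and (\ref{rd1}) converts $l(\cdot)$ into $1-l(\cdot)$ on each reciprocated argument, supplying the constant $|T_n|$. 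The identity involving $M''$ is proved in the same way with $R_{abc}$ in place of $L_{abc}$.

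The main obstacle is the bookkeeping in the telescoping step: specifying precisely which $Y_{a'b'c'}$ are affected by a single $L_{abc}$-move and verifying that their joint change is exactly a four-term instance of (\ref{Fcycl1}) rather than a more complicated combination. A cleaner alternative would be induction on $n$: degenerate a single Jacobi coordinate (e.g.\ $x_{1n}\to 0$) so that ${\mathcal L}$ on $N_n^+$ reduces to ${\mathcal L}$ on $N_{n-1}^+$ plus correction terms controlled by the pentagon identity (\ref{pent}), with Proposition~\ref{DilG} as the base case.
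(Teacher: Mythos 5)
Your reduction of the twelve-term chain to two ``row identities'' is sound and closely parallels the paper's own structure: the bijection $\{a,b,c\}\mapsto\{a,a+c-b,c\}$ together with (\ref{yty1}), and the bijection $\{a,b,c\}\mapsto\{n+1-c,n+1-b,n+1-a\}$ together with (\ref{yty2}), (\ref{invol}) and (\ref{rd1}), are exactly the ingredients of the paper's Lemma~\ref{YYlem} and Proposition~\ref{ZZ2}. The genuine gap is that the core input --- $\sum_{T_n} l\bigl(Y_{abc}(M)\bigr)+\sum_{T_n} l\bigl(Y_{abc}(M')\bigr)=|T_n|$ and its $M''$ companion --- is not actually proved by your argument. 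The paper establishes this (Proposition~\ref{ZZ1}) by an entirely different route: it verifies via the Pl\"ucker relations (\ref{PluJ}) that the functions $X_{abc}=Y_{abc}(M)/(1+Y_{abc}(M))$ and $X'_{abc}=Y_{abc}(M')/(1+Y_{abc}(M'))$ satisfy Zagier's criterion $\sum X\wedge(1-X)=0$, concludes from Lemma~\ref{ZD} that the dilogarithm sum is constant, and then evaluates the constant on the special matrix $M_x$ with all Jacobi coordinates equal, where the $Y$-variables reduce to $(c-b)/(b-a)$ by a Toeplitz determinant computation (Lemmas~\ref{Toep} and~\ref{M0}).

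Your tetrahedron-telescoping route has a concrete flaw beyond the bookkeeping you already flag as unresolved. The final step, ``passing from $\bar{M}'$ to $M'$ via (\ref{Mbar}) and (\ref{rd1}) converts $l(\cdot)$ into $1-l(\cdot)$ on each reciprocated argument,'' presumes $Y_{abc}(\bar{N})=1/Y_{abc}(N)$. This is false for $n\geq 4$, because the flag minors entering $Y_{abc}$ are not monomials in the Jacobi coordinates. Explicitly, for $n=4$ take $M$ with $x_{12}/x_{23}=1$, $x_{13}/x_{24}=1$, $x_{23}/x_{34}=1/4$, so that (\ref{xyz}) gives $(x,y,z)=(1,2,3)$ and $Y_{123}(M)=x/(1+y)=1/3$; for $\bar{M}$ the ratios are inverted, (\ref{xyz}) gives $(x',y',z')=(-4,-3,-2)$, and $Y_{123}(\bar{M})=x'/(1+y')=2\neq 3$. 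Consequently the constant $|T_n|$ is never produced, and (combined with the unverified claim that each single $L_{abc}$-move changes $\mathcal{L}$ by a four-term instance of (\ref{Fcycl1}) --- a claim that nothing in Propositions~\ref{FFFF} or~\ref{TE} localizes, since each $Y_{a'b'c'}$ is a ratio of four flag minors depending on many Jacobi coordinates) the main identity remains unproved; the same applies to the $R_{abc}$ version. The alternative induction via $x_{1n}\to 0$ is likewise only a sketch, so the heart of the theorem is missing from the proposal.
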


\begin{proof}
The key technical step is to prove the following:
\begin{prop}\label{ZZ1}
For every $M \in N^+_n$, the following dilogarithm identities hold:
\begin{align} 
\label{dilsum1}
{}& \sum\limits_{ \{a,b,c\}\in T_n } l\Bigl( Y_{abc} (M) \Bigr) +
   \sum\limits_{ \{a,b,c\}\in T_n } l\Bigl( Y_{abc} (M') \Bigr) 
   = \frac{n(n-1)(n-2)}{6}  , \\
\label{dilsum2}   
{}& \sum\limits_{ \{a,b,c\}\in T_n } l\Bigl( Y^{abc} (M) \Bigr) +
   \sum\limits_{ \{a,b,c\}\in T_n } l\Bigl( Y^{abc} (M'') \Bigr) 
   = \frac{n(n-1)(n-2)}{6} .
\end{align}    
\end{prop}
Along with relation (\ref{rd1}),  Proposition \ref{ZZ1} 
yields the first equality in (\ref{dil1}) and the 
last equality in (\ref{dil4}). Furthermore,  
along with Lemma~\ref{YYlem},
Proposition \ref{ZZ1}  implies the following:
\begin{prop}\label{ZZ2}
For every $M \in N^+_n$, the following dilogarithm identities hold:
\begin{equation}\label{yaux}
\begin{aligned} 
  \sum\limits_{ \{a,b,c\}\in T_n } l\bigl( Y_{abc}(M) \bigr)
 &= \sum\limits_{ \{a,b,c\}\in T_n } l\bigl( \tilde{Y}^{abc}(M) \bigr) \\
&= \sum\limits_{ \{a,b,c\}\in T_n } l\bigl( \tilde{Y}_{abc}(M) \bigr)
= \sum\limits_{ \{a,b,c\}\in T_n } l\bigl( Y^{abc}(M) \bigr) .
\end{aligned} 
\end{equation}
\end{prop}
Clearly, $M$ in (\ref{yaux}) can be replaced by $M'$ or $M''$.
Therefore, we have ``vertical'' equalities of all the sums in the 
left, middle, and right columns in (\ref{dil1})--(\ref{dil4}).
This completes the proof of Theorem~~\ref{Dil12} since,
by Proposition \ref{ZZ1}, we have already established the first 
equality in (\ref{dil1}) and the last equality in (\ref{dil4}).
\end{proof}

\subsection{$S_3$-symmetry of the dilogarithm identities}

Given $M \in N^+_n$ and $s \in S_3$, let $s(M)$ be defined as in (\ref{s3nt}).
Theorem~\ref{Dil12} can be formulated in the following way:
\begin{thm}\label{Dil1234}
For every $M \in N^+_n$ and any $s_1,s_2,s_3,s_4 \in S_3$, 
the following dilogarithm identities hold:
\begin{equation}\label{ysig}
\begin{aligned} 
{}&    \sum\limits_{ \{a,b,c\}\in T_n } 
 l\Bigl( \bigl( Y_{abc}(s_1(M)) \bigr)^{\mathrm{sgn}\, (s_1)} \Bigr)
 =   \sum\limits_{ \{a,b,c\}\in T_n }
 l\Bigl( \bigl( \tilde{Y}^{abc}(s_2(M)) \bigr)^{\mathrm{sgn}\, (s_2)} \Bigr) \\
{}&=  \sum\limits_{ \{a,b,c\}\in T_n } 
 l\Bigl( \bigl( \tilde{Y}_{abc}(s_3(M)) \bigr)^{\mathrm{sgn}\, (s_3)} \Bigr)
 =   \sum\limits_{ \{a,b,c\}\in T_n }
 l\Bigl( \bigl( Y^{abc}(s_4(M)) \bigr)^{\mathrm{sgn}\, (s_4)} \Bigr)  .
\end{aligned} 
\end{equation}
\end{thm}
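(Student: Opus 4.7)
The plan is to derive Theorem~\ref{Dil1234} by combining Theorem~\ref{Dil12} with the functional equation (\ref{rd1}), using Proposition~\ref{ZZ2} to synchronize the four different types of $Y$-variables. The statement packages two logically distinct claims: a \emph{vertical} one asserting that, for each fixed $s \in S_3$, the four sums in (\ref{ysig}) computed with $Y_{abc}$, $\tilde{Y}^{abc}$, $\tilde{Y}_{abc}$, $Y^{abc}$ coincide; and a \emph{horizontal} one asserting that, for each fixed type of $Y$-variable, the six sums indexed by $s \in S_3$ coincide. I would handle them in this order.

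For the vertical claim, Proposition~\ref{ZZ2} applied to $s(M) \in N^+_n$ in place of $M$ gives equality of the four sums computed without any reciprocals. When $\mathrm{sgn}(s) = -1$, the relation (\ref{rd1}) applied termwise shifts each of the four sums by the same constant $|T_n| = \binom{n}{3}$, so the four reciprocated sums are also equal. This reduces the theorem to the horizontal claim that
\begin{align*}
\Sigma(s) := \sum_{\{a,b,c\} \in T_n} l\Bigl( \bigl( Y_{abc}(s(M)) \bigr)^{\mathrm{sgn}(s)} \Bigr)
\end{align*}
is independent of $s \in S_3$.

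The horizontal claim I would prove by walking along reduced words in the generators $\sigma_1,\sigma_2$, applying Theorem~\ref{Dil12} at each step. The cases $s \in \{id,\sigma_1,\sigma_2\}$ are already covered by (\ref{dil1}) together with (\ref{rd1}): for instance, $\sum l(1/Y_{abc}(M')) = |T_n| - \sum l(Y_{abc}(M')) = |T_n| - \sum l(1/Y_{abc}(M)) = \sum l(Y_{abc}(M))$. For $s = \sigma_2\sigma_1$ with $s(M) = (M')''$, I would apply Theorem~\ref{Dil12} with $M'$ as the base matrix and use $(M')' = M$ from Proposition~\ref{SG}(a) to obtain $\Sigma(\sigma_2\sigma_1) = \sum l(Y_{abc}((M')'')) = \sum l(1/Y_{abc}(M')) = \Sigma(id)$. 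The element $\sigma_1\sigma_2$ is symmetric. One further application of Theorem~\ref{Dil12} with $(M')''$ or $(M'')'$ as base, combined with (\ref{rd1}) to flip a final sign, handles the two length-three elements $\sigma_1\sigma_2\sigma_1$ and $\sigma_2\sigma_1\sigma_2$.

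The main conceptual point—and the reason Theorem~\ref{Dil1234} is more than a cosmetic relabeling of Theorem~\ref{Dil12}—is that the maps in (\ref{s3nt}) do not define an honest $S_3$-action on $N^+_n$: on this variety $((M')'')'$ and $((M'')')''$ can be genuinely distinct matrices, coinciding only on $\tilde{N}^+_n$ by Proposition~\ref{MS3}. The theorem therefore asserts a nontrivial equality of two dilogarithm sums evaluated at two different points of $N^+_n$. The only technical obstacle is bookkeeping: tracking the parity of reciprocals that accumulate as the base matrix is slid along a reduced word, and confirming at each step that this parity matches $\mathrm{sgn}(s)$. Once that is verified, the six-step chain of equalities closes and, together with the vertical step, yields all twenty-four equalities of~(\ref{ysig}).
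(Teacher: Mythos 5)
Your proposal is correct and follows essentially the same route as the paper: the equalities in (\ref{ysig}) are obtained by substituting $M \to M'$, $M''$, $(M')''$, $(M'')'$ into Theorem~\ref{Dil12} (equivalently, invoking Proposition~\ref{ZZ2} at $s(M)$ for the ``vertical'' equalities), using $(M')'=M$, $(M'')''=M$ from Proposition~\ref{SG} and relation (\ref{rd1}) to match the exponents $\mathrm{sgn}(s)$. Your write-up merely makes explicit the sign bookkeeping and the distinction between $((M')'')'$ and $((M'')')''$ that the paper's terse proof leaves implicit.
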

\begin{proof}
Indeed, for $s_i=id,\sigma_1,\sigma_2$, identities (\ref{ysig}) are those
given in (\ref{dil1})-(\ref{dil4}). For $s_i = \sigma_1   \sigma_2$,  
$\sigma_2   \sigma_1$, identities (\ref{ysig}) are obtained from 
(\ref{dil1})-(\ref{dil4}) by the substitution $M \to M'$ and $M \to M''$,
respectively. And for $s_i=(\sigma_1   \sigma_2   \sigma_1)$, 
$(\sigma_2  \sigma_1   \sigma_2)$, identities (\ref{ysig}) are obtained from 
(\ref{dil1})-(\ref{dil4}) by the substitution $M \to (M')''$ and $M \to (M'')'$.
\end{proof}

In conclusion, we give a generalization of Proposition~\ref{DilG}:
\begin{thm}\label{DilGG}
For every $G \in B^+_n$,  identities (\ref{dil1})--(\ref{dil4}) remain
valid if $M$, $M'$, and $M''$ are replaced by $G$, $\check{G}$, 
and $\hat{G}$, respectively.
\end{thm}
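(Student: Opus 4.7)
The plan is to reduce Theorem~\ref{DilGG} directly to Theorem~\ref{Dil12} by establishing a key diagonal-invariance of the $Y$-variables: each of the four families from Definition~\ref{YDef} is unchanged when its argument is multiplied from either side by a positive diagonal matrix. Once this is in place, Lemma~\ref{GMB} identifies $\check{G}$ and $\hat{G}$ with the involutes $M'$ and $N''$ of two convenient unipotent factors of $G$, up to such diagonal scalings, after which the desired identities fall out of Theorem~\ref{Dil12}.

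The first step is to establish the diagonal-invariance. For right flag minors, $\Delta_I(\Lambda G) = \bigl(\prod_{i\in I}\Lambda_{ii}\bigr)\Delta_I(G)$ and $\Delta_I(G\Lambda) = \bigl(\prod_{j=n-|I|+1}^n\Lambda_{jj}\bigr)\Delta_I(G)$, with analogous formulas for upper flag minors $\Delta^J$. For each of $Y_{abc}$, $\tilde{Y}_{abc}$, $Y^{abc}$, $\tilde{Y}^{abc}$, a direct bookkeeping on the four index sets in numerator and denominator (they share a common ``backbone'' with only a single-element swap between paired sets) shows that the induced multisets of row indices and column indices, and the multisets of cardinalities, all balance between numerator and denominator. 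Every diagonal prefactor therefore cancels, giving $Y_\bullet(\Lambda G) = Y_\bullet(G\Lambda) = Y_\bullet(G)$ for each positive diagonal $\Lambda$ and each of the four types.

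The second step decomposes $G \in B_n^+$ in two ways: as $G = M\Lambda$ with $M \in N_n^+$ and $\Lambda$ positive diagonal (the standard factorisation following Lemma~\ref{Jlem}), and as $G = \Lambda_\ell N$ with $N \in N_n^+$ and $\Lambda_\ell$ positive diagonal, obtained by taking $\Lambda_\ell$ to be the diagonal part of $G$ and setting $N = \Lambda_\ell^{-1} G$; total positivity of $G$ passes to $N$. Lemma~\ref{GMB}(a) then gives $\check{G} = M'\bigl(\Lambda\, P\tilde{D}_M P\bigr)$, which equals $M'$ times a positive diagonal matrix, since $P\tilde{D}_M P$ is diagonal (conjugation by $P$ preserves diagonality) and the entries of $\tilde{D}_M$ are strictly positive because $\check{G} \in B_n^+$ must be invertible. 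Lemma~\ref{GMB}(b) likewise gives $\hat{G} = \bigl(\Lambda_\ell\, \tilde{D}_N\bigr) N''$, a positive diagonal times $N''$. Combining with the invariance from the first step yields $Y_\bullet(G) = Y_\bullet(M) = Y_\bullet(N)$, $Y_\bullet(\check{G}) = Y_\bullet(M')$, and $Y_\bullet(\hat{G}) = Y_\bullet(N'')$ for every $Y$-variable of each of the four types.

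The third step applies Theorem~\ref{Dil12} separately to $M$ and to $N$. Applied to $M$, the first equality in each of (\ref{dil1})--(\ref{dil4}) becomes the corresponding equality for $(G,\check{G})$; applied to $N$, the last equality in each row becomes the corresponding equality for $(G,\hat{G})$, so all three sums in each row coincide. The cross-row equalities in each column reduce via the same invariance to the chain of equalities already proved in Proposition~\ref{ZZ2}. The only non-trivial work lies in the multiset cancellation of step one; I expect no serious obstacle, only the careful enumeration for the four $Y$-variable families, together with the observation that the diagonal corrections $\tilde{D}_M$ and $\tilde{D}_N$ coming from Lemma~\ref{GMB} are genuinely positive rather than just non-negative.
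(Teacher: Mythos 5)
Your proposal is correct and follows essentially the same route as the paper: the paper likewise first proves that all four families of $Y$-variables are invariant under two-sided multiplication by positive diagonal matrices, then uses the two factorizations $G=\Lambda_1 M_1=M_2\Lambda_2$ together with Lemma~\ref{GMB} to identify $Y_\bullet(\check{G})=Y_\bullet(M_2')$ and $Y_\bullet(\hat{G})=Y_\bullet(M_1'')$, and finally invokes Theorem~\ref{Dil12} (in the form of Theorem~\ref{Dil1234}) and Proposition~\ref{ZZ2}. The only cosmetic difference is notation for the two unipotent factors.
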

\noindent
(The proof is given in the Appendix.)

\appendix
\section{Appendix}

\begin{proof}[\bf Proof of Lemma~\ref{GM}] 
If $M \in N^+_n$, then all the leading principal 
minors of the matrix $M P$ are non zero.
Therefore,  the  matrix $M P$ admits an LU decomposition. 
Taking $L$ unipotent and 
writing $L = P M' P$ and $U= D_{\!\scriptscriptstyle M} M''$,
where $M'$ and $M''$ are unipotent, we obtain the decomposition~(\ref{gauss}).
Its uniqueness is due to Corollary 3.5.6 in~\cite{HJ}.

Now, we have to check that $M', M'' \in N^+_n$.
If $M$ is given by (\ref{Mjac}), then by Lemma~2.4.7  in \cite{BFZ}, we have
\begin{align}\label{mab}
  \Delta_{[a,b]}(M) = \prod_{i=1}^{b-a+1} \prod_{j=b+1}^n  x_{ij}  .
\end{align}
By reversing this formula, we obtain
\begin{align}\label{jcmm}
    x_{ij}(M) = \frac{\Delta_{[j-i,j-1]}(M)\,\Delta_{[j-i+2,j]}(M)}%
{\Delta_{[j-i+1,j-1]}(M)\,\Delta_{[j-i+1,j]}(M)}  \,.
\end{align}

Let us rewrite equation (\ref{gauss}) as 
$P M =M' (P D_{\!\scriptscriptstyle M} P) (P M'' P)$ and 
$MP =(PM'P)D_{\!\scriptscriptstyle M} M''$.
Then,  the Binet-Cauchy formula yields
\begin{align}\label{delij} 
 \varepsilon_I \, \Delta_{\bar{I}}(M) =
\Delta^{[1,|I|]}(D_{\!\scriptscriptstyle M}) \, \Delta_{I}(M') , \qquad 
 \varepsilon_J \, \Delta^{\bar{J}}(M) =
\Delta^{[1,|J|]}(D_{\!\scriptscriptstyle M}) \, \Delta^{J}(M'') ,
\end{align}
where  $\varepsilon_I = (-1)^{|I|(|I|-1)/2}$.
For $J=[1,i]$, the second equation in (\ref{delij}) yields  
\begin{align}\label{vre}
   (-1)^{i(i-1)/2} \Delta_{[1,i]}(M) =
  \Delta^{[1,i]}(D_{\!\scriptscriptstyle M})=
(D_{\!\scriptscriptstyle M})_{11}\ldots (D_{\!\scriptscriptstyle M})_{ii} .
\end{align}

{} From (\ref{vre})  and (\ref{mab}), we find $D_{\!\scriptscriptstyle M}$:
\begin{align}\label{dmk} 
  (D_{\!\scriptscriptstyle M})_{ii} = 
  (-1)^{i-1} \frac{ \prod_{k=i+1}^n x_{ik} }{ \prod_{k=1}^{i-1} x_{ki} } .
\end{align}
Note that the sign of $\Delta^{[1,|I|]}(D_{\!\scriptscriptstyle M})$ is equal to 
$\varepsilon_I$. Therefore, equations (\ref{delij})   imply that 
 the flag minors $\Delta_{I}(M')$ and $\Delta^{J}(M'')$ are positive 
for all $I,J$. Hence $M' \in N_n^+$ thanks to equation (\ref{jcmm}) and 
Lemma~\ref{Jlem}. Similarly, the positivity of all flag minors $\Delta^{J}(M'')$ 
implies that $M'' \in N_n^+$ by Theorem~1.8 in~\cite{BFZ}.
\end{proof}

\begin{proof}[\bf Proof of Lemma~\ref{BBc} and Lemma~\ref{GMB}] 
Equations (\ref{delij}) imply that 
\begin{align}\label{djim}
\Delta^{[1,|I|]}(\tilde{D}_{\!\scriptscriptstyle M}) \, \Delta_{I}(M')
=   \Delta_{\bar{I}} (M) , \qquad
\Delta^{[1,|J|]}(\tilde{D}_{\!\scriptscriptstyle M}) \, \Delta^{J}(M'') 
=    \Delta^{\bar{J}} (M) .
\end{align}
Using the first of these relations, we get
\begin{align*} 
{}   \Delta_{\bar{I}} (G) &= \Delta_{\bar{I}} (M \, \Lambda)
= \Delta_{\bar{I}} (M) \, \Delta_{[n+1-| I|,n]} (\Lambda) \\
{}&  
= \Delta_{I}(M') \, \Delta_{[n+1-| I|,n]} (\Lambda \, P  \tilde{D}_{\!\scriptscriptstyle M}  P)
= \Delta_{I}(M' \Lambda \, P  \tilde{D}_{\!\scriptscriptstyle M}  P) .
\end{align*}
Thus,   $\check{G}$ given by (\ref{gc}) is a solution
to the first equation in (\ref{ggc}). Using the second relation in~(\ref{djim}), 
one can check in the same way that $\hat{G}$ given by (\ref{gh})
is a solution to the second equation in (\ref{ggc}). 
Let us prove that these solutions are unique. Given $G \in B^+_n$, suppose that
$G_1, G_2 \in B^+_n$ are such that 
\begin{align}\label{ggg}
\Delta_{I} (G_1) = \Delta_{\bar{I}} (G) = \Delta_{I} (G_2)
\end{align}
for all $I \subset [1,n]$. $G_1$ and $G_2$ are uniquely represented as 
$G_l = M_l \, \Lambda_l$, $l=1,2$, where $M_l \in N^+_n$ and $\Lambda_l$ 
are diagonal matrices with positive diagonal entries. 
Taking $I=[k,n]$ in (\ref{ggg}), we see that 
$\prod_{i=k}^n (\Lambda_1)_{ii} = \prod_{i=k}^n (\Lambda_2)_{ii}$
for all $k=1,\ldots,n$. Hence $\Lambda_1 = \Lambda_2$. 
 
Since $\Delta_{I} (G_l)  = \Delta_{I} (M_l) \, \Delta_{[n+1-| I|,n]} (\Lambda_l)$
and we know that $\Lambda_1 = \Lambda_2$,
we infer from (\ref{ggg}) that $\Delta_{I} (M_1) = \Delta_{I} (M_2)$
for all $I$. Whence, taking $I=[i,j]$ and using (\ref{jcmm}),
we conclude that the Jacobi coordinates of $M_1$ and $M_2$
coincide. Hence $M_1 = M_2$ and so $G_1=G_2$. 
The proof of the uniqueness of $\hat{G}$ is analogous.  
\end{proof}

\begin{proof}[\bf Proof of Proposition~\ref{SG}] 

a) 
Equation (\ref{gauss}) can be rewritten as follows:
\begin{align}\label{ga1}
 P M' P = M P  (D_{\!\scriptscriptstyle M})^{-1} 
 \bigl(  D_{\!\scriptscriptstyle M} (M'')^{-1} D_{\!\scriptscriptstyle M}^{-1} \bigr)  .
\end{align}
Comparing (\ref{ga1}) with (\ref{gauss}), we see that
$(M')' =M$, $D_{\!\scriptscriptstyle M'} = D_{\!\scriptscriptstyle M}^{-1}$,
and $(M')'' = D_{\!\scriptscriptstyle M} (M'')^{-1} D_{\!\scriptscriptstyle M}^{-1}$.
Similarly, one finds that 
$(M'')'' =M$, $D_{\!\scriptscriptstyle M''} = D_{\!\scriptscriptstyle M}^{-1}$, and
$(M'')' = P  D_{\!\scriptscriptstyle M}^{-1}  P (M')^{-1} P D_{\!\scriptscriptstyle M} P$.

b)
Let $u,v,z \in N^+_n$ satisfy the following relation
\begin{align}\label{uvw}
  P z^t = v^ t\, d \, u ,
\end{align}
where $d$ is a diagonal matrix and $t$ denotes the matrix transposition operation.
Theorem 1.4 in \cite{BFZ} relates the Jacobi coordinates of $u$ (for a
factorization corresponding to an arbitrary minimal reduced word for $w_0$)
to the flag minors of $z$.  In particular, for the order of the factors as in (\ref{Mjac}),
this theorem yields
\begin{align}\label{tdtd}
 & \frac{\Delta^{[i,j]}(z) \, \Delta^{[i+1,j-1]}(z) }{\Delta^{[i,j-1]}(z) \,\Delta^{[i+1,j]}(z)} 
 = x_{n+1-j,n+1-i} (u)  .
\end{align}
Note that (\ref{uvw}) matches (\ref{gauss}) if we substitute $u$ by $M''$ and
$z$ by $PM^tP$. Therefore, using (\ref{tdtd}) and  taking into account that 
$\Delta^{[i,j]}(PM^tP) = \Delta_{[n+1-j,n+1-i]}(M)$, we obtain  
\begin{align}\label{xmmm}
  x_{ij}(M'') = \frac{\Delta_{[i,j]}(M)\,\Delta_{[i+1,j-1]}(M)}%
{\Delta_{[i,j-1]}(M)\,\Delta_{[i+1,j]}(M)}   \,.
\end{align}
With the help of formula (\ref{mab}), we obtain from (\ref{xmmm}) the 
second equation in (\ref{xxm}).

Let $S \in B_n$ denote the diagonal matrix such that $S_{ii}=(-1)^{i-1}$.
Equation (2.11.3) in~\cite{BFZ}, along with the relation $S J_k(x) S = J_k(-x)$, 
implies that if $M \in N_n^+$, then $S M^{-1} S \in N_n^+$ and 
 $x_{ij}(S M^{-1} S) =  x_{n+1-j,n+1-i}(M)$. Note also that $S P S = (-1)^{n-1} P$.
Therefore, multiplying (\ref{gauss}) by $S$ on both sides and taking the matrix inverse,
we get a counterpart of (\ref{uvw}), where $u$ is substituted by $S(M')^{-1}S$ 
and $z$ by $P S (M^{-1})^t S P$. 
Thus, we can replace $M''$ by $S (M')^{-1} S $ and $M$ by $S M^{-1} S $
in (\ref{xmmm}) and hence also in the second equation in (\ref{xxm}).
By doing so,  we obtain the first equation in (\ref{xxm}):
\begin{align*}
{}    x_{ij}(M')  =  x_{n+1-j,n+1-i} \bigl( S (M')^{-1} S \bigr)   
       = \frac{ \prod\limits_{k=n+2-i}^{n} x_{j+1-i,k}( S M^{-1} S) }%
{ \prod\limits_{k=n+1-i}^{n} x_{j-i,k} ( S M^{-1} S)}  =
  \frac{ \prod\limits_{k=1}^{i-1} x_{k,n+i-j}(M) }%
{ \prod\limits_{k=1}^{i} x_{k,n+1+i-j} (M)} .
\end{align*}
\end{proof}

\begin{proof}[\bf Proof of Proposition~\ref{MS3}] 

a) Given $M \in N_n^+$, let $Q_i(M)$ denote the ratio of the l.h.s. to the 
r.h.s. of (\ref{con}). A straightforward computation using (\ref{xxm})
shows that $Q_i(M')=1/Q_i(M)$ and $Q_i(M'')=1/Q_i(M)$.
Since $M \in \tilde{N}^+_n$ iff $Q_i(M)=1$ for all $i<n/2$, the claim follows.  

b) The claim can be proved by computing the Jacobi coordinates of both 
 sides of (\ref{Mrho}) with the help of (\ref{xxm}). However, this computation 
 is rather tedious, so   we will give another proof. 
 First, we infer from  (\ref{ga1}) that  
\begin{align} 
\nonumber
 P (M')'' P &= P \bigl(  D_{\!\scriptscriptstyle M} (M'')^{-1} D_{\!\scriptscriptstyle M}^{-1} \bigr) P=
 P D_{\!\scriptscriptstyle M} P M^{-1}  P M'  \\
\label{ga2} 
 &= \bigl( P D_{\!\scriptscriptstyle M} P M^{-1}  P D_{\!\scriptscriptstyle M}^{-1} P \bigr) 
  P D_{\!\scriptscriptstyle M} M' .
\end{align}
Comparing (\ref{ga2}) with the r.h.s. of (\ref{gauss}), we see that 
$((M')'')'=P D_{\!\scriptscriptstyle M} P M^{-1}  P D_{\!\scriptscriptstyle M}^{-1} P$.
In a similar way, one finds that  $((M'')')''=
 D_{\!\scriptscriptstyle M}^{-1}  M^{-1} D_{\!\scriptscriptstyle M}$.
Thus $((M')'')'= ((M'')')''$ holds iff $M$ commutes 
with $(D_{\!\scriptscriptstyle M} P)^2$. But the latter is a diagonal
matrix and, since all the entries of $M$ above the diagonal are positive,
we conclude that $(D_{\!\scriptscriptstyle M} P)^2$ must be a multiple
of the unit matrix. Furthermore, we have
$\det\bigl( (D_{\!\scriptscriptstyle M} P)^2\bigr) =
\det\bigl( D_{\!\scriptscriptstyle M}^2 \bigr)= 
\det\bigl( M^2 \bigr)=1$. 
Taking formula (\ref{dmk}) into account, we conclude that 
$((M')'')'= ((M'')')''$ holds iff 
$(D_{\!\scriptscriptstyle M})_{ii} (D_{\!\scriptscriptstyle M})_{n+1-i,n+1-i} = (-1)^{n+1}$
for all~$i$. The latter condition is equivalent to (\ref{con}).
Taking into account that $\det{D_{\!\scriptscriptstyle M}} = \pm 1$, 
we infer that conditions for $i \geq n/2$ are equivalent to these for $i< n/2$.
\end{proof}

\begin{proof}[\bf Proof of  Proposition~\ref{SY}] 
Relations  (\ref{my1}) follow directly from~(\ref{xxm}). Using them, we obtain
\begin{align*} 
{}& y_{ij}\bigl( ((M')'')' \bigr) = 
 \bar{y}_{i,n+i-j}\bigl( (M')'' \bigr) =
 y_{n-j,n+i-j}\bigl( M' \bigr) =
 \bar{y}_{n-j,n-i} (M) , \\[0.5mm] 
{}& y_{ij}\bigl( ((M'')')'' \bigr) = 
 \bar{y}_{j-i,j}\bigl( (M'')' \bigr) =
 y_{j-i,n-i}\bigl( M'' \bigr) =
 \bar{y}_{n-j,n-i} (M) .
\end{align*}
Whence follows the first equality in (\ref{my2}). The second equality in (\ref{my2})
follows from the first one by noting that $\bar{y}_{ij}(M) = 1/y_{ij}(M)$.
\end{proof}

\begin{proof}[\bf Proof of Lemma~\ref{YYlem}] 
Recall that $M_{ij}=0$ if $j<i$. Therefore,
$\Delta_{[a,b]\cup[c,n]}= 
\Delta_{[a,b]\cup[c,n]}^{[a+c-b-1,n]}
=\Delta_{[a,b]}^{[a+c-b-1,c-1]} \Delta_{[c,n]}^{[c,n]}
=\Delta_{[a,b]}^{[a+c-b-1,c-1]} = 
\Delta_{[1,a-1]}^{[1,a-1]} \Delta_{[a,b]}^{[a+c-b-1,c-1]}
= \Delta_{ [1,b]}^{[1,a-1] \cup [a+c-b-1,c-1]}=
\Delta^{[1,a-1] \cup [a+c-b-1,c-1]}$.
Thus, 
\begin{align} \label{upr}
 \Delta_{[a,b]\cup[c,n]}= 
 \Delta_{[a,b]}^{[a+c-b-1,c-1]}
 =\Delta^{[1,a-1] \cup [a+c-b-1,c-1]}  \,.
\end{align}
Relations (\ref{yty1}) are easily checked with the help of (\ref{upr}). 

Set $I_1=[1,a]\cup[b+1,c-1]$, $I_2 =[1,a-1]\cup[b+1,c]$, 
$I_3 =[1,a-1]\cup[b,c]$, $I_4= [1,a]\cup[b,c-1]$.
Note that $|I_1|=|I_2|$ and $|I_3|=|I_4|$. Combining the second 
equation in (\ref{YYa}) with
the first relation  in (\ref{delij}), we obtain   
\begin{align} \label{uu}
  \tilde{Y}_{abc}(M') = 
  \frac{ \Delta_{I_1} (M')  \Delta_{I_3} (M')}{ \Delta_{I_2} (M')  \Delta_{I_4} (M')}
  = \frac{ \Delta_{\bar{I}_1} (M)  \Delta_{\bar{I}_3} (M)}
  	{ \Delta_{\bar{I}_2} (M)  \Delta_{\bar{I}_4} (M)} .
\end{align}
Comparing the r.h.s. of (\ref{uu}) with the first equation in (\ref{YYa}), 
we obtain the first relation in (\ref{yty2}). The second relation in (\ref{yty2})
is derived analogously by combining the second equation in (\ref{YYb}) with 
the second relation in (\ref{delij}).
\end{proof}

For the proof of  Lemma~\ref{M0} we will need the following statement.

\begin{lem}\label{Toep}
Let\, $T_{k,m}$, $k,m \in {\mathbb N}$, be the $(m\,{+}\,1)\,{\times}\, (m\,{+}\,1)$
Toeplitz matrix such that
\begin{align}\label{Tij}
 \bigl( T_{k,m} \bigr)_{ij} = \frac{1}{(k+j-i)!} \,, \qquad 1 \leq i,j \leq m+1 ,
\end{align}  
where $1/p! = 0$ if $p <0$.
The determinant of $T_{k,m}$ is given by
\begin{align}\label{Fkm}
  F_{k,m} \equiv \det \bigl( T_{k,m} \bigr) = \frac{1!\, 2! \ldots m!}{k! (k+1)! \ldots (k+m)!} .
\end{align} 
\end{lem}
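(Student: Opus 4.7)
The plan is to prove the formula by induction on $m$, the key step being a recursion $F_{k,m} = \tfrac{m!}{(k+m)!}\,F_{k,m-1}$. The base case $m=0$ is immediate: $T_{k,0}$ is the $1{\times}1$ matrix $(1/k!)$ and the numerator product in the formula for $F_{k,0}$ is empty, so both sides equal $1/k!$.

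For the inductive step, my first move would be to perform the column operations $C_j \mapsto C_j - \tfrac{1}{k+j-1}\,C_{j-1}$ for $j = m+1,m,\ldots,2$ (in this reverse order, so that each $C_{j-1}$ used is still the original column). These operations preserve the determinant. For $j \geq 2$ they send the $(1,j)$-entry to
$$\frac{1}{(k+j-1)!} - \frac{1}{k+j-1}\cdot\frac{1}{(k+j-2)!} \;=\; 0,$$
so the first row of the new matrix becomes $(1/k!, 0, \ldots, 0)$. For $i,j \geq 2$, a direct computation, using $(k+j-i)! = (k+j-i)\cdot(k+j-i-1)!$ together with the partial-fraction identity $\tfrac{1}{k+j-i}-\tfrac{1}{k+j-1} = \tfrac{i-1}{(k+j-i)(k+j-1)}$, shows that the new $(i,j)$-entry equals $\tfrac{i-1}{(k+j-i)!\,(k+j-1)}$.

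Expanding the determinant along the first row then yields $F_{k,m} = \tfrac{1}{k!}\,\det B_{k,m}$, where $B_{k,m}$ is the $m{\times}m$ block indexed by rows $2,\ldots,m+1$ and columns $2,\ldots,m+1$. The observation that closes the argument is the diagonal factorization
$$B_{k,m} \;=\; \mathrm{diag}(1,2,\ldots,m)\cdot T_{k,m-1}\cdot \mathrm{diag}\!\left(\tfrac{1}{k+1},\tfrac{1}{k+2},\ldots,\tfrac{1}{k+m}\right),$$
which immediately gives $\det B_{k,m} = m!\cdot\tfrac{k!}{(k+m)!}\cdot F_{k,m-1}$ and hence the claimed recursion. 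Telescoping it from $m$ down to $0$ reproduces formula~(\ref{Fkm}).

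There is no real conceptual obstacle: the only nontrivial manipulation is the simplification of the $(i,j)$-entries in the sub-block $B_{k,m}$, after which the diagonal factorization is a one-line check. The convention $1/p!=0$ for $p<0$ is harmless throughout, since both sides of each identity vanish whenever the relevant index turns negative.
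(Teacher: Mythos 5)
Your proof is correct, but it takes a genuinely different route from the paper's. The paper deduces the determinant from the Desnanot--Jacobi (Dodgson condensation) identity $\det M\,\det M_0+\det M_{12}\,\det M_{21}=\det M_{11}\,\det M_{22}$, which for a Toeplitz matrix turns into the three-term recursion $F_{k,m}=\bigl(F_{k,m-1}^2-F_{k+1,m-1}F_{k-1,m-1}\bigr)/F_{k,m-2}$; the closed form is then simply verified to satisfy it. This fits naturally into the paper, since the condensation identity is itself justified there via the Pl\"ucker relations already in use. Your argument is more elementary and more constructive: the column operations $C_j\mapsto C_j-\tfrac{1}{k+j-1}C_{j-1}$ kill the first row off the corner, and the diagonal factorization of the remaining block produces the clean two-term recursion $F_{k,m}=\tfrac{m!}{(k+m)!}F_{k,m-1}$, which telescopes without any need to guess the answer in advance. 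Your computation of the new $(i,j)$-entries checks out, including the boundary cases, with one small caveat: your remark that the factorial convention is harmless covers $k+j-i<0$ (where everything vanishes), but the case $k+j-i=0$ is not covered by the partial-fraction manipulation as written, since it involves dividing by $k+j-i$. There the original entry is $1$, the subtracted term vanishes because $1/(-1)!=0$, and your formula $\tfrac{i-1}{(k+j-i)!\,(k+j-1)}$ also gives $1$ since $i-1=k+j-1$; so the identity still holds, but this line deserves its own one-sentence check. With that noted, the proof is complete and valid for all $k\geq 0$, which is what the application $T_{c-b-1,b-a}$ requires.
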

  
\begin{proof}  Equation (\ref{Fkm}) is easily checked for $m=0,1$.
In order to prove it by induction for $m>1$, we use the well-known identity 
(that can be checked with the help of  (\ref{PluJ}) and (\ref{upr}))
\begin{align}\label{m4}
 \det M \, \det M_0 + \det M_{12} \, \det M_{21} = \det M_{11} \, \det M_{22} ,
\end{align}
where $M$ is an arbitrary $(m\,{+}\,1)\,{\times}\, (m\,{+}\,1)$ matrix, 
$M_{ij}$ are its  corner $m\,{\times}\, m$ submatrices, and 
$M_0$ is the central $(m\,{-}\,1)\,{\times}\, (m\,{-}\,1)$ submatrix. 
 For $T_{k,m}$,  identity (\ref{m4}) yields
\begin{align}\label{FF1}
  F_{k,m} = \bigl( F_{k,m-1} F_{k,m-1} - F_{k+1,m-1} F_{k-1,m-1}  \bigr)/ F_{k,m-2} .
\end{align}
It is straightforward to check that  substitution into (\ref{FF1}) of the expressions for 
$F_{k,m-1}$,  $F_{k+1,m-1}$, and $F_{k,m-2}$ given by (\ref{Fkm})
yields the desired expression for $F_{k,m}$.
\end{proof}

\begin{proof}[\bf Proof of Lemma~\ref{M0}]
First, we note that if $M^{(n)}_x \in N_n^+$ is given by (\ref{Mjac}), where $x_{ij}=x$
for all $i,j$, then the entries of $M^{(n)}_x$ are expressed in terms of 
binomial coefficients:
\begin{align}\label{M0ij}
    \bigl(M^{(n)}_x \bigr)_{ij} = {{n -i} \choose {j-i} } \, x^{j-i} .
\end{align} 
Indeed, (\ref{M0ij}) is obvious for $n=2$. Note that in (\ref{Mjac}) we have
$( {\mathbb J}_{n-1} )_{ij} = \delta_{ij} + x \delta_{i,j-1}$. 
Therefore, equation  (\ref{M0ij}) for $n>2$ is derived by induction:
\begin{align}\nonumber
    \bigl(M^{(n)}_x \bigr)_{ij} &=  
    \bigl( (M^{(n-1)}_x \oplus 1) \, {\mathbb J}_{n-1} \bigr)_{ij} 
     = \bigl(M^{(n-1)}_x \bigr)_{ij} + x  \bigl(M^{(n-1)}_x \bigr)_{i,j-1} \\
\nonumber {}&
     = x^{j-i} \, {{n-1-i} \choose {j-i} }  +  x^{j-i} \, {{n-1-i} \choose {j-i-1} }
 = x^{j-i} \, {{n-i} \choose {j-i} }  .
\end{align}  
Equation (\ref{M0ij}) implies that, in the row $i$, all the entries have a common
factor $x^{-i} (n-i)!$ and, in the column $j$, all the entries have a common factor 
$x^{j}/ (n-j)!$. Hence, using formula (\ref{upr}), we obtain 
\begin{align} \nonumber
{} \Delta_{[a,b]\cup[c,n]} \bigl(M^{(n)}_x \bigr) &= 
 \Delta_{[a,b]}^{[a+c-b-1,c-1]}\bigl(M^{(n)}_x \bigr) \\
\nonumber
{}&   = \Bigl( \prod_{i=a}^b \frac{ (n-i)! }{x^i} \Bigr)
  \Bigl( \prod_{j=a+c-b-1}^{c-1} \frac{x^j}{ (n-j)! } \Bigr) 
  \,   \det (T_{c-b-1,b-a}) ,
\end{align}
where $T_{k,m}$ is the Toeplitz matrix (\ref{Tij}). Then, by Lemma~\ref{Toep},
we have
\begin{align}\label{Dmx}
{}& \frac{ \Delta_{[a,b]\cup[c,n]} \bigl(M^{(n)}_x \bigr) }%
 { x^{(c-b-1)(b+1-a)} }   
  = \frac{ \prod_{i=a}^b   (n-i)!}{ \prod_{j=a+c-b-1}^{c-1}   (n-j)!} \,
  \frac{1!\, 2! \ldots (b-a)!}{(c-b-1)! (c-b)! \ldots (c-a-1)!} .
\end{align}
Combining this formula with the first equation in (\ref{YYa}), we infer that
$Y_{abc}(M_x)= \frac{c-b}{b-a}$. 
Whence, by (\ref{yty1}), we have  
$\tilde{Y}^{abc}(M_x)= Y_{a,a+c-b,c}(M_x)=\frac{b-a}{c-b}$. 
 
Relations (\ref{xxm}) imply that $M'_x=M''_x=M_{x^{-1}}$.
Therefore
$Y_{abc}(M'_x) = Y_{abc}(M''_x) = Y_{abc}(M_{x^{-1}})
= Y_{abc}(M_x)$ since the latter does not depend on $x$.
Combining these relations with equations (\ref{yty2}), we obtain 
$\tilde{Y}_{abc}(M_x) = 
 1/{Y_{n+1-c,n+1-b,n+1-a} (M_x)  }=\frac{c-b}{b-a}$ and
$Y^{abc}(M_x) = 
 1/{\tilde{Y}^{n+1-c,n+1-b,n+1-a} (M_x) } =  \frac{b-a}{c-b}$.
\end{proof}

Let ${\mathbb R}(t)^*$ be the multiplicative group of nonzero 
rational functions in one or several variables,
$t=(t_1,t_2,\ldots)$. Let $ {\mathbb R}(t)^*  \wedge  {\mathbb R}(t)^* $ 
be the abelian group generated by formal elements
$x \wedge y$, where $x, y \in {\mathbb R}(t)^*$, subject to the following relations:
\begin{align}\label{wed1}
   x \wedge y = - y \wedge x , \qquad
   (xy) \wedge z = x \wedge z + y \wedge z .
\end{align} 
These relations imply, in particular, that
\begin{align}\label{wed2}
   x \wedge 1 = 0 , \qquad
  \frac{1}{x} \wedge y = - x \wedge y.
\end{align} 

The proof of  Proposition~\ref{ZZ1} will be based on the following 
statement from \cite{Zag1} (see also Chapter II, Section 2A in \cite{Zag2}):
\begin{lem}[\cite{Zag1}, \S\,7, Prop.\,1]\label{ZD}
Let $X_1,\ldots,X_m \in {\mathbb R}(t)$ be a set of functions such that
\begin{align}\label{sumx}
   \sum_{k=1}^m X_k \wedge (1 - X_k) =0  
\end{align} 
in ${\mathbb R}(t)^*  \wedge  {\mathbb R}(t)^*$.
Then $\sum_{k=1}^m L(X_k)$ does not depend on $t$.
\end{lem}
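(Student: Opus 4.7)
\textbf{Proof plan for Lemma~\ref{ZD}.} The strategy I would follow is to show that the 1-form $d\Phi$ vanishes on the appropriate parameter region, where $\Phi(t) := \sum_{k=1}^{m} L(X_k(t))$. Then $\Phi$ must be locally constant on each connected component of the open subset $U$ of the parameter space on which every $X_k$ takes values in $(0,1)$ (the natural real domain of $L$).

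First I would compute $dL$. A direct calculation from $L(x) = Li_2(x) + \frac{1}{2}\log x\,\log(1-x)$ together with $Li_2'(x) = -\log(1-x)/x$ gives $L'(x) = -\frac{1}{2}\bigl(\frac{\log(1-x)}{x} + \frac{\log x}{1-x}\bigr)$, which I would rewrite in the manifestly antisymmetric form
\begin{align*}
dL(x) = \tfrac{1}{2}\bigl[\log x \cdot d\log(1-x) - \log(1-x) \cdot d\log x\bigr].
\end{align*}
Summing over $k$ yields
\begin{align*}
d\Phi \;=\; \tfrac{1}{2}\sum_{k=1}^{m} \bigl[\log X_k \cdot d\log(1-X_k) \,-\, \log(1-X_k)\cdot d\log X_k\bigr].
\end{align*}

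Next, on $U$ I would introduce the ``log--dlog'' map $\phi$ from the wedge square of positive-valued rational functions into real 1-forms on $U$, defined on generators by
\begin{align*}
\phi(x \wedge y) \;=\; \tfrac{1}{2}\bigl[\log x \cdot d\log y \,-\, \log y \cdot d\log x\bigr].
\end{align*}
I would check that $\phi$ respects the defining relations (\ref{wed1}): bilinearity comes from $\log(xy)=\log x+\log y$ (valid once the arguments are positive) together with the linearity of $d\log$, and antisymmetry is visible from the formula. Hence $\phi$ descends to a well-defined group homomorphism. By construction $d\Phi = -\phi\bigl(\sum_{k} X_k \wedge (1-X_k)\bigr)$, and by the hypothesis of the lemma the argument vanishes in the abstract wedge group, so $d\Phi \equiv 0$ on $U$ and $\Phi$ is constant on each connected component, as claimed.

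The main obstacle I anticipate is the well-definedness of $\phi$. On the full multiplicative group $\mathbb{R}(t)^*$ the logarithm is multivalued, so $\log(xy)=\log x+\log y$ holds only modulo $2\pi i\mathbb{Z}$, and bilinearity of $\phi$ would hold only up to 1-forms of the type $2\pi i\cdot d\log z$. Restricting to $U$ sidesteps this subtlety, since every logarithm becomes real and single-valued there; this restriction is harmless because the eventual application (Proposition~\ref{ZZ1}) feeds $\phi$ only the Y-variables of totally positive matrices, which are automatically positive. A more structural alternative, which I would fall back on if one wanted the statement at an arbitrary point of $\mathbb{R}(t)^*$, is to reinterpret the hypothesis as vanishing in the Bloch group and use the fact that $L$ factors through it.
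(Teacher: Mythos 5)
The paper does not actually prove Lemma~\ref{ZD}; it is imported verbatim from \cite{Zag1} (see also \cite{Zag2}), so there is no internal proof to compare against. Your reconstruction is the standard (Zagier's) argument: differentiate $\Phi=\sum_k L(X_k)$, identify $d\Phi$ with the image of $\sum_k X_k\wedge(1-X_k)$ under a ``$\log\,d\log$'' homomorphism into $1$-forms, and invoke the hypothesis. Your computation of $dL$ is correct, up to an inconsequential sign: with your own conventions $d\Phi=+\phi\bigl(\sum_k X_k\wedge(1-X_k)\bigr)$, not $-\phi(\cdots)$.

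There is, however, one genuine soft spot, and your proposed fix (restricting $\phi$ to functions positive on $U$) does not quite close it. The hypothesis (\ref{sumx}) asserts vanishing in the \emph{full} group ${\mathbb R}(t)^*\wedge{\mathbb R}(t)^*$, whereas your $\phi$ is only defined on wedges of positive-valued functions. Vanishing of the image of $\sum_k X_k\wedge(1-X_k)$ in the larger wedge group does not automatically give vanishing of that formal sum in the ``positive'' wedge group (the map induced on exterior squares by a subgroup inclusion is not obviously injective here), so you cannot yet apply $\phi$ and conclude $d\Phi=0$. The standard repair is to define $\phi$ on all of ${\mathbb R}(t)^*\wedge{\mathbb R}(t)^*$ by $\phi(x\wedge y)=\tfrac12\bigl(\log|x|\,d\log|y|-\log|y|\,d\log|x|\bigr)$, viewed as a $1$-form away from the zeros and poles of the functions involved: since $\log|xy|=\log|x|+\log|y|$ for all nonzero real values, the relations (\ref{wed1}) are respected with no multivaluedness issue at all (your worry about $2\pi i$ ambiguities disappears over ${\mathbb R}$), the hypothesis then kills $\phi\bigl(\sum_k X_k\wedge(1-X_k)\bigr)$ outright, and on the region where every $X_k$ lies in $(0,1)$ this form coincides with $d\Phi$. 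With that modification your argument is complete. Your connectedness caveat (constancy only on each component of $U$) is worth keeping; it is harmless in the application to Proposition~\ref{ZZ1}, where the parameter domain is the positive orthant of Jacobi coordinates and hence connected.
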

\begin{proof}[\bf Proof of  Proposition~\ref{ZZ1}] 
Let $I \subset [1,n]$, $J \subset [1,n]$, and 
let the triple $1\leq a < b < c \leq n$ be such that
$I \cap \{a,b,c\} = J \cap \{a,b,c\} = \emptyset$.
Then the right and upper flag minors of any square
matrix $M$ satisfy the following Pl\"ucker relations 
(see, e.g., Proposition 2.6.2. in \cite{BFZ}): 
\begin{align}
\label{PluJ}
{} \Delta_{I\cup\{a,c\}} \Delta_{I\cup\{b\}} &= 
\Delta_{I\cup\{a,b\}} \Delta_{I\cup\{c\}} +
\Delta_{I\cup\{b,c\}} \Delta_{I\cup\{a\}} , \\[0.5mm]
\label{PluI}
{} \Delta^{J\cup\{a,c\}} \Delta^{J\cup\{b\}} &= 
\Delta^{J\cup\{a,b\}} \Delta^{J\cup\{c\}} +
\Delta^{J\cup\{b,c\}} \Delta^{J\cup\{a\}} .
\end{align} 
Let $M \in N_n^+$ be given by (\ref{Mjac}). Consider
functions $X_{abc}, X'_{abc} \in {\mathbb R}(x_{12},x_{13},\ldots)^*$
given by
\begin{align}\label{xy} 
 X_{abc} = \frac{Y_{abc}(M)}{1+Y_{abc}(M)} , \qquad
 X'_{abc} = \frac{Y_{abc}(M')}{1+Y_{abc}(M')} .
\end{align}  
Using Pl\"ucker relations (\ref{PluJ}) and 
the first relation in (\ref{yty2}), we obtain 
(in this proof, all the minors $\Delta_I$ are those of $M$)
\begin{align}
\label{xx1} 
{}& X_{abc} = 
\frac{\Delta_{[a,b-1]\cup[c+1,n]} \Delta_{[a+1,b]\cup[c,n]} }
  { \Delta_{[a+1,b]\cup[c+1,n]} \Delta_{[a,b-1]\cup[c,n]} } , \qquad 
  1- X_{abc} =  
\frac{\Delta_{[a,b]\cup[c+1,n]} \Delta_{[a+1,b-1]\cup[c,n]} }
  { \Delta_{[a+1,b]\cup[c+1,n]} \Delta_{[a,b-1]\cup[c,n]} } , \\ 
\label{xxx1}   
{}& X'_{abc} =  
\frac{\Delta_{[1,n-c]\cup[n+2-b,n+1-a]} 
 \Delta_{[1,n+1-c]\cup[n+1-b,n-a]} }
  { \Delta_{[1,n-c]\cup[n+1-b,n-a]} 
 \Delta_{[1,n+1-c]\cup[n+2-b,n+1-a]} }  , \\
\label{xxx2} 
{}& 1 -X'_{abc} =
\frac{\Delta_{[1,n-c]\cup[n+1-b,n+1-a]} 
 \Delta_{[1,n+1-c]\cup[n+2-b,n-a]} }
  { \Delta_{[1,n-c]\cup[n+1-b,n-a]} 
 \Delta_{[1,n+1-c]\cup[n+2-b,n+1-a]} } .
\end{align}  
We claim that 
\begin{align}\label{sumX}
 \sum_{\{a,b,c\}\in T_n } X_{abc} \wedge (1 - X_{abc}) + 
 \sum_{\{a,b,c\}\in T_n } X'_{abc} \wedge (1 - X'_{abc})
   =0 .
\end{align} 
Indeed, consider the terms in (\ref{sumX}) which
contain $\Delta_{[\alpha,\beta]\cup[\gamma,n]}$.
In the generic case, $\alpha>1$, $\gamma-\beta >1$,
the minor $\Delta_{[\alpha,\beta]\cup[\gamma,n]}$
is not present in $X'_{abc}$ and $(1 - X'_{abc})$,
whereas $X_{abc}$ contains it if $a=\alpha$, $b=\beta+1$,
and $c=\gamma-1$ or $c=\gamma$ and also if
$a=\alpha-1$, $b=\beta$, and $c=\gamma-1$ or $c=\gamma$. 
Similarly, $(1 - X_{abc})$ contains this minor if 
$b=\beta$, $c = \gamma-1$, and $a=\alpha-1$ or $a=\alpha$
and also if $b=\beta+1$, $c = \gamma$, and 
$a=\alpha-1$ or $a=\alpha$. Using relations (\ref{wed1})
and (\ref{wed2}), we can write the contribution of all
these terms to (\ref{sumX}) as 
\begin{align}\label{con1}
\Delta_{[\alpha,\beta]\cup[\gamma,n]} \wedge 
\frac{(1 - X_{\alpha,\beta+1,\gamma-1})}{(1-X_{\alpha,\beta+1,\gamma})}
\frac{(1 - X_{\alpha-1,\beta,\gamma})}{(1-X_{\alpha-1,\beta,\gamma-1})} 
\frac{ X_{\alpha-1,\beta,\gamma-1}}{ X_{\alpha,\beta,\gamma-1}}
\frac{  X_{\alpha,\beta+1,\gamma}}{ X_{\alpha-1,\beta+1,\gamma}}.
\end{align} 
Substituting here the expressions (\ref{xx1}) for
$X_{abc}$ and $(1-X_{abc})$, it is straightforward to check that
(\ref{con1}) is equal to 
$\Delta_{[\alpha,\beta]\cup[\gamma,n]} \wedge 1$ which vanishes
by~(\ref{wed2}). 

The minor $\Delta_{[1,\alpha]\cup[\beta,\gamma]}$
is not present in $X_{abc}$ and $(1 - X_{abc})$ 
in the generic case, $\gamma <n$, $\beta-\alpha>1$, 
and an absolutely analogous consideration shows that
its contribution to (\ref{sumX}) also vanishes.

There are two special cases when a minor is present 
in both sums in (\ref{sumX}). First, it is the case of
$\Delta_{[1,\beta]\cup[\gamma,n]}$, $\gamma-\beta>1$.
Using relations (\ref{wed1}) and (\ref{wed2}),  we 
can write the contribution to (\ref{sumX}) of the terms 
which contain this minor as 
\begin{align}\label{con2}
\Delta_{[1,\beta]\cup[\gamma,n]} \wedge 
\frac{(1 - X_{1,\beta+1,\gamma-1})}{(1-X_{1,\beta+1,\gamma})}
\frac{ X_{1,\beta+1,\gamma}}{ X_{1,\beta,\gamma-1}}
\frac{(1 - X'_{1,n+2-\gamma,n-\beta})}{(1-X'_{1,n+2-\gamma,n+1-\beta})} 
\frac{  X'_{1,n+2-\gamma,n+1-\beta}}{ X'_{1,n+1-\gamma,n-\beta}}.
\end{align} 
Again, it is straightforward to check with the help of (\ref{xx1}), (\ref{xxx1}), 
and (\ref{xxx2})
  that (\ref{con2}) is equal to 
$\Delta_{[1,\beta]\cup[\gamma,n]} \wedge 1$ and so it vanishes.

In the second case, $\Delta_{[\alpha,n]}$, $\alpha>1$,
(or $\Delta_{[1,\alpha]}$, $\alpha<n$, which can be treated 
in the same way)
the situation is somewhat different. This minor is present in 
all $X_{abc}$ such that $a=\alpha-1$, $c=b+1$ and 
in all $(1-X_{abc})$ such that $b=a+1$, $c=\alpha$.
It is also present
in $X'_{1,n+2-\alpha,n}$ and $(1-X'_{1,n+1-\alpha,n})$.
The contribution of these terms to (\ref{sumX}) is
given by 
\begin{align}\label{con3}
\Delta_{[\alpha,n]} \wedge 
\frac{(1-X'_{1,n+2-\alpha,n})}{ X'_{1,n+1-\alpha,n} } \,
\prod_{b = \alpha}^{n-1} (1-X_{\alpha-1,b,b+1}) \,
\prod_{a = 1}^{\alpha-2} \frac{1}{ X_{a,a+1,\alpha}} .
\end{align} 
One can observe that both products in (\ref{con3})
are ``telescopic", so that (\ref{con3}) is equal to
\begin{align}\label{con3b}
\Delta_{[\alpha,n]} \wedge 
\frac{(1-X'_{1,n+2-\alpha,n})}{ X'_{1,n+1-\alpha,n} } \,
 \frac{\Delta_{[\alpha-1,n-1]} \, \Delta_{[\alpha+1,n]}}
{\Delta_{\{\alpha-1\}\cup[\alpha+1,n]} \, \Delta_{[\alpha,n-1]}}
\frac{\Delta_{\{\alpha-1\}\cup[\alpha+1,n]} 
 \, \Delta_{\{1\}\cup [\alpha,n]}}
{\Delta_{\{1\}\cup [\alpha+1,n]} \, \Delta_{[\alpha-1,n]}}  .
\end{align}  
Using (\ref{xxx1}) and (\ref{xxx2}), we see that (\ref{con3b}) 
is equal to 
$\Delta_{[\alpha,n]} \wedge 1$ and 
so it also vanishes.

Thus, we have checked that (\ref{sumX}) holds. Whence,
by Lemma~\ref{ZD}, 
\begin{align}\label{sumXX}
\sum_{\{a,b,c\}\in T_n} \bigl( L(X_{abc}) + L(X'_{abc})\big)= 
\sum_{\{a,b,c\}\in T_n} 
\bigl( l(Y_{abc}(M)) + l(Y_{abc}(M'))\bigr)
= \mathrm{const} .
\end{align}

In order to determine the value of the constant in (\ref{sumXX}), we 
take $M=M_x$. Recall that
$Y_{abc}(M'_x) = Y_{abc}(M_x)$ (cf.\ the proof of Lemma~\ref{M0}). Therefore, 
\begin{align}
\nonumber
{}& \sum_{\{a,b,c\}\in T_n} 
\bigl( l(Y_{abc}(M_x)) + l(Y_{abc}(M_x'))\bigr)
 = \sum_{\{a,b,c\}\in T_n} 
\bigl( l(Y_{abc}(M_x)) + l(Y_{a,a+c-b,c}(M_x))\bigr)
 \\
{}& \label{sumXXb}   
 \stackrel{(\ref{Y0})}{=}  \! \sum_{\{a,b,c\}\in T_n} 
\Bigl( l \bigl(\frac{c-b}{b-a}\bigr) + l \bigl(\frac{b-a}{c-b} \bigr) \Bigr)
\stackrel{(\ref{rd1})}{=}  \sum_{\{a,b,c\}\in T_n}  \! 1 = \frac{n(n-1)(n-2)}{6}.
\end{align}

\vspace*{1mm}
In order to prove relation (\ref{dilsum2}), we set    
\begin{align}\label{zy} 
 W_{abc} = \frac{Y^{abc}(M)}{1+Y^{abc}(M)} , \qquad
 W''_{abc} = \frac{Y^{abc}(M'')}{1+Y^{abc}(M'')} .
\end{align}  
Using Pl\"ucker relations (\ref{PluI}) and 
the second relation in (\ref{yty2}), one can check that $W_{abc}$ and 
$W''_{abc}$ coincide, respectively, with $1-X_{abc}$ and $1-X'_{abc}$
if every right flag minor is replaced with its upper counterpart
(e.g. $\Delta_{[a,b]\cup[c+1,n]}$ is replaced with $\Delta^{[a,b]\cup[c+1,n]}$ etc.).
Since the Pl\"ucker relations for right and upper minors are identical, cf. (\ref{PluJ})
and (\ref{PluI}), relation
\begin{align}\label{sumZ}
 \sum_{\{a,b,c\}\in T_n } W_{abc} \wedge (1 - W_{abc}) + 
 \sum_{\{a,b,c\}\in T_n } W''_{abc} \wedge (1 - W''_{abc})
   =0  
\end{align} 
can be proved by repeating the proof of (\ref{sumX}). 
Therefore, by Lemma~\ref{ZD}, we have 
\begin{align}\label{sumZZ}
\sum_{\{a,b,c\}\in T_n} \bigl( L(W_{abc}) + L(W''_{abc})\big)= 
\sum_{\{a,b,c\}\in T_n} 
\bigl( l(Y^{abc}(M)) + l(Y^{abc}(M''))\bigr)
= \mathrm{const} .
\end{align}
 
Substituting $M=M_x$ in (\ref{sumZZ}), using Lemma~\ref{M0},
and repeating the computation (\ref{sumXXb}), we conclude that the constants 
on the r.h.s. of (\ref{sumXX}) and (\ref{sumZZ}) coincide.
\end{proof}

\begin{proof}[\bf Proof of  Proposition~\ref{ZZ2}]  
Combining the first relation in (\ref{yty2}) with  (\ref{dilsum1}), we obtain
\begin{align}\label{pp1}
  \sum\limits_{ \{a,b,c\}\in T_n } l\bigl( Y_{abc}(M) \bigr)
  \stackrel{(\ref{rd1}),(\ref{dilsum1})}{=} 
  \sum\limits_{ \{a,b,c\}\in T_n } l\bigl( 1/Y_{abc}(M') \bigr)
 \stackrel{(\ref{yty2})}{=} \sum\limits_{ \{a,b,c\}\in T_n } l\bigl( \tilde{Y}_{abc}(M) \bigr)  .
\end{align}  
Analogously, combining the second relation in (\ref{yty2}) with  (\ref{dilsum2}), we obtain
\begin{align}\label{pp2}
  \sum\limits_{ \{a,b,c\}\in T_n } l\bigl( \tilde{Y}^{abc}(M) \bigr)
  \stackrel{(\ref{yty2})}{=} 
  \sum\limits_{ \{a,b,c\}\in T_n } l\bigl( 1/Y^{abc}(M'') \bigr)
 \stackrel{(\ref{rd1}),(\ref{dilsum2})}{=} \sum\limits_{ \{a,b,c\}\in T_n } l\bigl( Y^{abc}(M) \bigr)  .
\end{align}  
The l.h.s. of (\ref{pp1}) and (\ref{pp2}) coincide due to (\ref{yty1}), 
so the chain of equalities in (\ref{yaux}) follows.
\end{proof}

\begin{proof}[\bf Proof of  Theorem~\ref{DilGG}]  
Let $Z_{abc}$ be any of the functions 
$Y_{abc}$, $\tilde{Y}_{abc}$, $Y^{abc}$, $\tilde{Y}^{abc}$.
Equations (\ref{YYa}) and (\ref{YYb}) define $Z_{abc}(G)$ for
$G \in B_n^+$. Let $G_1,G_2 \in B_n^+$ be such that
$G_2 = \Lambda_1 G_1 \Lambda_2$, where $\Lambda_1$ and 
$\Lambda_2$ are diagonal matrices with positive diagonal entries. 
Then we have 
\begin{align}\label{dgd}
\Delta_{I} (G_2)  = \Delta_{I} (\Lambda_1)
\Delta_{I} (G_1) \, \Delta_{[n+1-|I|,n]} (\Lambda_2) , \quad 
\Delta^{J} (G_2)  = \Delta^{[1,|J|]} (\Lambda_1)
\Delta^{J} (G_1) \,  \Delta^{J}(\Lambda_2) .
\end{align}
Consider the case  $Z_{abc}=Y_{abc}$. We have
\begin{align}\label{yI}
 Y_{abc}(G_2) = 
\frac{ \Delta_{I_{abc} \cup \{a\}}(G_2) \Delta_{I_{abc} \cup \{b,c\}}(G_2) }
  {     \Delta_{I_{abc} \cup \{a,b\}}(G_2) \Delta_{I_{abc} \cup \{c\}}(G_2) } ,
\end{align}
where $I_{abc}= [a+1,b-1]\cup [c+1,n]$. Using the first relation
in (\ref{dgd}), it is easy to check that the 
contributions from $\Lambda_1$ and $\Lambda_2$ cancel. Therefore, we have
$Y_{abc} (G_1) =   Y_{abc} (G_2)$.  The other cases of $Z_{abc}$ can be 
treated similarly and we conclude that
\begin{align}\label{yyyyg1}
  Z_{abc} (G_1) =    Z_{abc} (G_2) .
\end{align}  

Let $G \in B^+_n$ be given by $G=\Lambda_1 M_1 = M_2 \Lambda_2$, where
$M_1, M_2 \in N^+_n$ and $\Lambda_1, \Lambda_2$ are diagonal matrices.
Using Lemma~\ref{GMB}, Theorem~\ref{Dil1234}, and relations (\ref{yyyyg1}),
we obtain
\begin{align}
\nonumber
& \sum\limits_{ \{a,b,c\}\in T_n } l\Bigl( Z_{abc} (\check{G}) \Bigr)  
 \stackrel{(\ref{gc}), (\ref{yyyyg1})}{=}
\sum\limits_{ \{a,b,c\}\in T_n } l\Bigl( Z_{abc} (M'_2) \Bigr) 
 \stackrel{(\ref{ysig})}{=}
   \sum\limits_{ \{a,b,c\}\in T_n } l\Bigl( \frac{1}{Z_{abc} (M_2) } \Bigr) \\
\label{dilgmg}
&
   \stackrel{(\ref{yyyyg1})}{=}
   \sum\limits_{ \{a,b,c\}\in T_n } l\Bigl( \frac{1}{Z_{abc} (G) } \Bigr)
   \stackrel{(\ref{yyyyg1})}{=}
   \sum\limits_{ \{a,b,c\}\in T_n } l\Bigl( \frac{1}{Z_{abc} (M_1) } \Bigr)  \\
\nonumber
& \stackrel{(\ref{ysig})}{=}
   \sum\limits_{ \{a,b,c\}\in T_n } l\Bigl( Z_{abc} (M''_1) \Bigr) 
   \stackrel{(\ref{gh}), (\ref{yyyyg1})}{=}
    \sum\limits_{ \{a,b,c\}\in T_n } l\Bigl( Z_{abc} (\hat{G}) \Bigr) .
\end{align}  
To complete the proof, it suffices to notice that,
by Proposition~\ref{ZZ2}, the value of the sum on the r.h.s. of (\ref{dilgmg})
is the same for all the  four choices of the function $Z_{abc}$. 
\end{proof}

\vspace*{1mm}
\small{
{\bf Acknowledgements.} 
The authors are grateful to the anonymous referee of 
the journal Linear Algebra and Its Applications  for useful remarks.
This work was supported by the project MODFLAT of the 
European Research Council (ERC) and the National Center of Competence
 in Research SwissMAP of the Swiss National Science Foundation,
 and in part by the Russian Fund for Basic Research Grant No. 18--01--00271.
}

\vspace*{1.5mm}
{\sc \small
\noindent
Section of Mathematics, University of Geneva, 
C.P. 64, 1211 Gen\`eve 4, Switzerland \\[1mm]
Steklov Mathematical Institute,
Russian Academy of Sciences,
Fontanka 27, 191023, St. Petersburg, Russia}

\end{document}